\def\ps@pprintTitle{%
 \let\@oddhead\@empty
 \let\@evenhead\@empty
 \def\@oddfoot{}%
 \let\@evenfoot\@oddfoot}
\newcommand{\Eh} {\mathcal{E}_h}
\newcommand{\vrtx}{\mathsf{v}}
\newcommand{\fh}{f_h}
\definecolor{MyGray}{rgb}{0.61,0.61,0.61}
\definecolor{MyDarkGreen}{rgb}{0,0.45,0}
\newcommand{\RED}[1]{{\color{red}#1}}
\def\trait #1 #2 #3 {\vrule width #1pt height #2pt depth #3pt}
\def\fin{\hfill
        \trait .3 5 0
        \trait 5 .3 0
        \kern-5pt
        \trait 5 5 -4.7
        \trait 0.3 5 0
\medskip}
\newcommand{\ENDPROOF}{\fin}
\newtheorem{theorem}{Theorem}[section]
\newtheorem{lemma}[theorem]{Lemma}
\newtheorem{example}[theorem]{Example}
\newtheorem{remark}[theorem]{Remark}
\newenvironment{proof}{\textit{Proof.}}{\fin}
\def\P{{\mathbb P}}
\newcommand{\Vhrs}[1]{{(V^{\ptwo,\pone}_{h,#1})}^*}
\newcommand{\Vhrp} [1]{V^{\ptwo,\pone}_{h,#1}}
\newcommand{\VhPrp}[1]{V^{\ptwo,\pone}_{h,#1}(\P)}
\newcommand{\VhtPrp}[1]{\widetilde{V}^{\ptwo,\pone}_{h,#1}(\P)}
\newcommand{\Whrp} [1]{W^{\ptwo,\pone}_{h,#1}}
\newcommand{\WhPrp}[1]{W^{\ptwo,\pone}_{h,#1}(\P)}
\newcommand{\INTP}{\footnotesize{I}}
\newcommand{\REAL}{\mathbbm{R}}
\newcommand{\CARD}[1]{\#{\big(#1\big)}}
\newcommand{\ASSUM}[1]{\textbf{(#1)}}
\newcommand{\TERM} [1]{\mbox{$\mathbf{(#1)}$}}
\newcommand{\tTERM}[1]{\begin{tiny}\mbox{$\mathbf{(#1)}$}\end{tiny}}
\newcommand{\TABROW}{}
\newcommand{\EOD}{\end{document}}
\newcommand{\FNOTE}[1]{\footnote{\textbf{#1}}}
\newcommand{\tell}{\widetilde{\ell}}
\newcommand{\nv}{\mathbf{n}}
\newcommand{\tv}{\mathbf{t}}
\newcommand{\xv}{\mathbf{x}}
\newcommand{\yv}{\mathbf{y}}
\newcommand{\as}{a}
\newcommand{\cs}{c}
\newcommand{\fs}{f}
\newcommand{\hs}{h}
\newcommand{\js}{j}
\newcommand{\ks}{k}
\newcommand{\ms}{m}
\newcommand{\ps}{p}
\newcommand{\qs}{q}
\newcommand{\rs}{r}
\renewcommand{\ss}{s}
\newcommand{\us}{u}
\newcommand{\vs}{v}
\newcommand{\ws}{w}
\newcommand{\xs}{x}
\newcommand{\ys}{y}
\newcommand{\Cs}{C}
\newcommand{\Ds}{D}
\newcommand{\Ms}{M}
\newcommand{\Vs}{V}
\newcommand{\Vsp}{V^{\prime}}
\newcommand{\calD}{\mathcal{D}}
\newcommand{\calE}{\mathcal{E}}
\newcommand{\calP}{\mathcal{P}}
\newcommand{\calV}{\mathcal{V}}
\newcommand{\LTWO}  {L^2}
\newcommand{\LS}[1] {L^{#1}}
\newcommand{\HS}[1] {H^{#1}}
\newcommand{\CS}[1] {C^{#1}}
\newcommand{\PS}[1] {\mathbbm{P}_{#1}}
\renewcommand{\P} {\textsf{P}}            
\newcommand  {\E} {e}
\newcommand  {\V} {\mathsf{v}}
\newcommand{\hh}{h}
\newcommand{\Th}{\Omega_{\hh}}
\newcommand{\xvP}{\xv_{\P}}        
\newcommand{\xvE}{\xv_{\E}}        
\newcommand{\xvV}{\xv_{\V}}        
\newcommand{\hP}{\hh_{\P}}
\newcommand{\hE}{\hh_{\E}}
\newcommand{\hV}{\hh_{\V}}
\newcommand{\mP}{\ABS{\P}}
\newcommand{\Pset}{\mathcal{P}}    
\newcommand{\Eset}{\mathcal{E}}    
\newcommand{\Vset}{\mathcal{V}}    
\newcommand{\NMB}{N}
\newcommand{\NP}{\NMB^{\Pset}}   
\newcommand{\NE}{\NMB^{\Eset}}   
\newcommand{\NV}{\NMB^{\Vset}}   
\newcommand{\NPV}{\NMB^{\Vset}_{\P}}      
\newcommand{\NPE}{\NMB^{\Eset}_{\P}}      
\newcommand{\dS}{\,ds}
\newcommand{\dx}{\,d\xv}
\newcommand{\tsx}{t_x}
\newcommand{\tsy}{t_y}
\newcommand{\nsx}{n_x}
\newcommand{\nsy}{n_y}
\newcommand{\fsh}{\fs_{\hh}}
\newcommand{\ush}{\us_{\hh}}
\newcommand{\usI}{\us_{\INTP}}
\newcommand{\vsh}{\vs_{\hh}}
\newcommand{\wsh}{\ws_{\hh}}
\newcommand{\asP}{\as_{\pone}^{\P}}
\newcommand{\ash}{\as_{\hh}}
\newcommand{\ashP}{\as_{\hh}^{\P}}
\newcommand{\cshat}{\widetilde{\cs}}
\newcommand{\muh}  {\widetilde{\mu}}
\newcommand{\BIL} [2]{\left<#1,#2\right>}
\newcommand{\bil} [2]{\big<#1,#2\big>}
\newcommand{\SP} {S^{\P}}
\newcommand{\nlen}{\hspace{-0.2mm}}
\newcommand{\snorm}  [2]{|#1|_{#2}}
\newcommand{\norm}   [2]{|\nlen|#1|\nlen|_{#2}}
\newcommand{\ABS}    [1]{\left|#1\right|}
\newcommand{\PizP}[1]{\Pi^{0,\P}_{#1}}
\newcommand{\PiPr}[1]{\Pi^{\pone,\P}_{#1}} 
\newcommand{\Pizr}[1]{\Pi^{0,\P}_{#1}}  
\newcommand{\restrict}[2]{{#1}_{|{#2}}}
\newcommand{\pone}{\ps_1}
\newcommand{\ptwo}{\ps_2}
\begin{document}

\begin{frontmatter}


  \title{On arbitrarily regular conforming virtual element methods for
    elliptic partial differential equations}
  
  
  \author[MOXA] {P.~F.~Antonietti}
  \author[T5]   {G. Manzini}
  \author[UNIMI]{S. Scacchi}
  \author[MOXA] {and M. Verani}


  \address[MOXA]{
    MOX, Dipartimento di Matematica,
    Politecnico di Milano, Italy;
    \emph{e-mail:\{paola.antonietti, marco.verani\}@polimi.it}
  }
  \address[T5]{
    Group T-5,
    Theoretical Division,%
    Los Alamos National Laboratory,
    Los Alamos, NM,
    USA;
    \emph{e-mail: gmanzini@lanl.gov}
  }
  \address[UNIMI]{
    Dipartimento di Matematica,
    Universit\`a degli Studi di Milano,
    Italy,
    \emph{e-mail: simone.scacchi@unimi.it}
  }


  \renewcommand{\RED}[1]{#1}
  \renewcommand{\FNOTE}[1]{} 

  \begin{abstract}
    The Virtual Element Method (VEM) is a very effective framework to
    design numerical approximations with high global regularity to the
    solutions of elliptic partial differential equations.
    In this paper, we review the construction of such approximations
    for an elliptic problem of order $\pone$ using conforming, finite
    dimensional subspaces of $\HS{\ptwo}(\Omega)$, where $\pone$ and
    $\ptwo$ are two integer numbers such that $\ptwo\geq\pone\geq1$
    and $\Omega\in\REAL^2$ is the computational domain.
    An abstract convergence result is presented in a suitably defined
    energy norm.
    The space formulation and major aspects such as the choice and
    unisolvence of the degrees of freedom are discussed, also
    providing specific examples corresponding to various practical
    cases of high global regularity.
    Finally, the construction of the ``enhanced'' formulation of the
    virtual element spaces is also discussed in details with a proof
    that the dimension of the ``regular'' and ``enhanced'' spaces is
    the same and that the virtual element functions in both spaces can
    be described by the same choice of the degrees of freedom.
  \end{abstract}


  \begin{keyword}
    polyharmonic problem,
    virtual element method,
    polytopal mesh,
    high-order methods,
    high-regular methods~\\~\\
    \textbf{AMS subject classification:} 65N12; 65N15
  \end{keyword}
  
\end{frontmatter}
  

\raggedbottom
\setcounter{secnumdepth}{4}
\setcounter{tocdepth}{4}


\allowdisplaybreaks

\renewcommand{\RED}[1]{#1}
\renewcommand{\FNOTE}[1]{}

\section{Introduction}
\label{sec1:intro}

The conforming finite element method is based on the construction of a
finite dimensional approximation spaces that are typically only
$\CS{0}$-continuous~\cite{Ciarlet:2002} on the meshes covering the
computational domain.
The construction of sych approximation spaces with higher regularity
is normally deemed a difficult task because it requires a set of basis
functions with such global regularity.
Examples in this direction can be found all along the history of
finite elements from the oldest works in the sixties of the last
century,
e.g.,~\cite{Argyris-Fried-Scharpf:1968,Bell:1969,Clough-Tocher:1965}
to to the most recent attempts
in~\cite{Zhang:2009,Zhang:2016,Zhang-Hu:2015,Wu-Lin-Hu:2021}.
Despite its intrinsic difficulty, designing approximations with global
$\CS{1}$- or higher regularity is still a major research topic.
Such kind of approximations have indeed a natural application in the
numerical treatment of problems involving high-order differential
operators.

The Virtual Element Method
(VEM)~\cite{BeiraodaVeiga-Brezzi-Cangiani-Manzini-Marini-Russo:2013}
does not require the explicit knowledge of the basis functions
spanning the approximation spaces in its formulation and
implementation.
The crucial idea behind the VEM is that the elemental approximation
spaces, which are globally ``glued'' in a highly regular conforming
way, are defined elementwise as the solutions of a partial
differential equation.
The functions that belong to such approximation spaces are dubbed as
``virtual'' as they are never really computed, with the noteworthy
exception of a subspace of polynomials that are indeed used in the
formulation of the method.
The virtual element functions are uniquely characterized by a set of
values, the \emph{degrees of freedom}, that are actually solved for in
the method.

The virtual element 'paradigm'' thus provides a major breakthrough in
obtaining highly regular Galerkin methods as it allows the
construction of numerical approximation of any order of accuracy and
global conforming regularity that work on unstructured two-dimensional
and three-dimensional meshes with very general polytopal elements.

Roughly speaking, the VEM is a Galerkin-type projection method that
generalize the finite element method, which was originally designed
for simplicial and quadrilateral/hexahedral meshes, to polytopal
meshes.
Other important families of methods that are suited to polytopal
meshes are
the polygonal/polyhedral finite element method
\cite{Sukumar-Tabarraei:2004}
the mimetic finite difference
method\cite{BeiraodaVeiga-Lipnikov-Manzini:2014}
the discontinuous Galerkin method on polygonal/polyhedral
grids\cite{Antonietti-Giani-Houston:2013,Cangiani-Dong-Georgoulis-Houston:2017};
the hybrid discontinuous Galerkin
method\cite{Cockburn-Dong-Guzman:2008}; and the
hybrid high--order method~\cite{DiPietro-Droniou:2020}.

The conforming VEM was first developed for second-order elliptic
problems in primal
formulation~\cite{BeiraodaVeiga-Brezzi-Cangiani-Manzini-Marini-Russo:2013,BeiraodaVeiga-Brezzi-Marini-Russo:2016b},
and then in mixed
formulation~\cite{BeiraodaVeiga-Brezzi-Marini-Russo:2016c,Brezzi-Falk-Marini:2014}
and nonconforming
formulation~\cite{AyusodeDios-Lipnikov-Manzini:2016}.
Despite its relative youthness (the first paper was published in
2013), the VEM has been very successful in a wide range of scientific
and engineering applications.
A non-exhaustive list of applications includes, for example, the works of References
\cite{Benedetto-Berrone-Borio:2016,Perugia-Pietra-Russo:2016,Mascotto-Perugia-Pichler:2019a,Antonietti-Bruggi-Scacchi-Verani:2017,Chi-Pereira-Menezes-Paulino:2020,Wriggers-Rust-Reddy:2016,Aldakheel-Hudobivnik-Hussein-Wriggers:2018,Certik-Gardini-Manzini-Vacca:2018:ApplMath:journal,Antonietti-Bertoluzza-Prada-Verani:2020,Benvenuti-Chiozzi-Manzini-Sukumar:2019,ArtiolideMirandaLovadinaPatruno:2017,Artioli-deMiranda-Lovadina-Patruno:2018,Park-Chi-Paulino:2019,Park-Chi-Paulino:2020,Antonietti-Manzini-Mazzieri-Mourad-Verani:2021}.
Virtual element spaces forming de Rham complexes for the Stokes,
Navier-Stokes and Maxwell equations were proposed
in~\cite{BeiraodaVeiga-Lovadina-Vacca:2016,%
  BeiraodaVeiga-Lovadina-Vacca:2018,%
  BeiraodaVeiga-Brezzi-Marini-Russo:2016a}.
A VEM for Helmholtz problems based on non-conforming approximation
spaces of Trefftz functions, i.e., functions that belong to the kernel
of the Helmholtz operator, is
found~\cite{Mascotto-Perugia-Pichler:2019}

The first works using a $\CS{1}$-regular conforming VEM addressed the
classical plate bending
problems~\cite{Brezzi-Marini:2013,Chinosi-Marini:2016}, second-order
elliptic
problems~\cite{BeiraodaVeiga-Manzini:2014,BeiraodaVeiga-Manzini:2015},
and the nonlinear Cahn-Hilliard
equation~\cite{Antonietti-BeiraodaVeiga-Scacchi-Verani:2016}.
More recently, highly regular virtual element spaces were considered
for the von {K}\'arm\'an equation modelling the deformation of very
thin plates~\cite{Lovadina-Mora-Velasquez:2019}, geostrophic
equations~\cite{Mora-Silgado:2021} and fourth-order subdiffusion
equations~\cite{subdiffusion:2021}, two-dimensional plate vibration
problem of Kirchhoff plates~\cite{Mora-Rivera-Velasquez:2018}, the
transmission eigenvalue problems~\cite{Mora-Velasquez:2018} the
fourth-order plate buckling eigenvalue
problem~\cite{Mora-Velasquez:2020}.
In~\cite{Antonietti-Manzini-Verani:2019}, we proposed the
highly-regular conforming VEM for the two-dimensional polyharmonic
problem $(-\Delta)^{\pone}\us=\fs$, $\pone\geq 1$.
The VEM is based on an approximation space that locally contains
polynomials of degree $\rs\geq2\pone-1$ and has a global $\HS{\pone}$
regularity.
In~\cite{Antonietti-Manzini-Scacchi-Verani:2021}, we extended this
formulation to a virtual element space that can have arbitrary
regularity $\ptwo\geq\pone\geq1$ and contains polynomials of degree
$\rs\geq\ptwo$.
This VEM is a generalization of the VEMs for second- and fourth-order
problems since the approximation space for $\ptwo=\pone=1$ coincides
with the conforming virtual element spaces for the Poisson equation of
Reference~\cite{BeiraodaVeiga-Brezzi-Cangiani-Manzini-Marini-Russo:2013}
and the approximation space for $\ptwo=\pone=2$ coincides with the
conforming virtual element spaces for the and the biharmonic equation
of Reference~\cite{Brezzi-Marini:2013}.
VEMs for three-dimensional problems are also available for the
fourth-order linear elliptic
equation~\cite{BeiraodaVeiga-Dassi-Russo:2020} (see
also~\cite{Brenner-Sung:2019}), and highly-regular conforming VEM in
any dimension has been proposed in~\cite{Huang:2021}.

\medskip
In this paper, we review the detailed construction of the virtual
element spaces with arbitrary order of accuracy and regularity for the
numerical approximation of two-dimensional problems involving the
polyharmonic operator of degree $\pone$.
Such a construction follows the standard guidelines of the VEM, which
we briefly summarize here.
As the VEM is a conforming Galerkin variational method, its
formulation requires the definition of a suitable finite dimensional
approximation space, which is obtained by combining in a conforming
way local (elemental) finite dimensioal spaces.
The local virtual element spaces are defined in every mesh element by
all the solutions of a specific polyharmonic problem of degree
$\ptwo\geq\pone$.
The loading terms of the partial differential equations defining the
elememtal virtual element spaces can be all the polynomials of degree
(up to) $\rs-2\pone$, where the integer number $\rs\geq\ptwo$ is the
order of the virtual element space.
The traces of the virtual element functions and all its normal
derivatives of order $\js$ from one to $\ptwo-1$ on the elemental
edges are univariate polynomials of degree at least $\rs-\js$ (in some
cases the polynomial degree can be a little higher than $\rs-\js$).
From the definition, it also follows the fundamental property that the
polynomials of degree up to $\rs$ inside all elements are a linear
subspace of the virtual element space of degree $\rs$.
Then, the elemental spaces are ``glued'' together to form a global
space with $H^{\ptwo}$-regularity.
A very careful choice of the degrees of freedom, which as usual are
nodal values associated with the mesh vertices or polynomial moments
associated with edges and elements, makes the elliptic projection onto
the polynomials of degree $\rs$ computable.
An $\LS{2}$-orthogonal projection onto the polynomials of degree
$\rs-\pone$ in every mesh element is also computable in the
``modified'' (or ``enhanced'') formulation of the virtual element
method.
In this work, we also present a detailed discussion of the enhanced
formulation and its major properties.
The enhanced formulation is obtained by extending the similar
contruction for the Poisson equation presenting in the pioneering
paper~\cite{Ahmad-Alsaedi-Brezzi-Marini-Russo:2013} to our case.
These polynomial projection operators are finally used to construct
the discrete approximation of the bilinear form and the right-hand
side that are used in the virtual element approximation.
An abstract convergence result holds, that can be proved by assuming
only a few fundamental properties of the virtual element formulation.

\medskip
The remaining part of the manuscript is organized as follows.
In Section~\ref{sec2:continuous_pbl} we introduce the continuous
polyharmonic problem and its weak formulation.
In Section~\ref{sec3:discrete_pbl} we introduce the virtual element
discretization and recall the main abstract convergence result.
In Section~\ref{sec4:VEM} we present the formulation of the conforming
virtual element approximation with higher-order regularity. 
Finally, in Section~\ref{sec6:conclusions} we draw our conclusions.
\section{The continuous problem}
\label{sec2:continuous_pbl}

Let $\Omega\subset\REAL^2$ be an open, bounded, convex domain with
polygonal boundary $\Gamma$.
For any integer $\pone\geq1$, we consider the polyharmonic problem
\begin{subequations}
  \label{eq:poly:strong}
  \begin{align}
    (-\Delta)^{\pone}\us &= \fs\phantom{0}\qquad\text{in~}\Omega,\label{eq:poly:pblm:1}\\
    \partial^j_n\us   & = 0\phantom{\fs}\qquad\text{for~}j=0,\ldots,\pone-1\text{~on~}\Gamma,\label{eq:poly:pblm:2}
  \end{align}
\end{subequations}
where $\partial_n\us=\nv\cdot\nabla\us$ is the normal derivative of
$\us$ and $\partial^j_n\us$ is the normal derivative applied $j$ times
to $\us$ with the useful convention that $\partial^0_n\us=\us$ for
$j=0$.
Let
\begin{align*}
  \Vs:=\HS{\pone}_{0}(\Omega) =
  \big\{\vs\in\HS{\pone}(\Omega):\partial^j_n\vs=0\text{~on~}\Gamma,\,j=0,\ldots,\pone-1\big\}.
\end{align*}
Denoting the duality pairing between $\Vs$ and its dual $\Vsp$ by
$\BIL{\cdot}{\cdot}$, the variational formulation of the
polyharmonic problem~\eqref{eq:poly:strong} reads as
\begin{align}
  \mbox{\emph{Find $\us\in\Vs$ such that:}}\quad
  \as_{\pone}(\us,\vs) = \BIL{\fs}{\vs} \quad\forall\vs\in\Vs,
  \label{eq:poly:varform}
\end{align}
where, for any nonnegative integer $\ell$, the bilinear form
$\as_{\pone}(\cdot,\cdot):\Vs\times\Vs\to\REAL$ is given by
\begin{align}
  \as_{\pone}(\us,\vs) := 
  \begin{cases}
    \displaystyle\,\int_{\Omega} \nabla\Delta^\ell\us\cdot\nabla\Delta^\ell\vs\,\dx  & \mbox{for~$\pone=2\ell+1$, $\ell\geq0$},\\[1em]
    \displaystyle\,\int_{\Omega} \Delta^\ell\us\,\Delta^\ell\vs\,\dx                 & \mbox{for~$\pone=2\ell$, $\ell\geq1$}.
  \end{cases}
  \label{eq:a:def}
\end{align}
If $\fs\in\LTWO(\Omega)$ we have
\begin{align}
  \BIL{\fs}{\vs} := (\fs,\vs) = \int_{\Omega}\fs\vs\,\dx,
  \label{eq:poly:rhs:def}
\end{align}
where $(\cdot,\cdot)$ denotes the $\LTWO$-inner product.
The bilinear form $\as_{\pone}(\cdot,\cdot)$ is coercive and
continuous with respect to
$\norm{\us}{\Vs}:=(\as_{\pone}(\us,\us))^{1/2}$, which is
a norm on $\HS{\pone}_{0}(\Omega)$.
The coercivity and continuity constants are respectively denoted by
$\alpha$ and $\Ms$, and their value depends on the regularity of
$\Omega$ and its boundary $\Gamma$.
Coercivity and continuity implies existence and uniqueness of the
solution to \eqref{eq:poly:varform} from an application of the
Lax-Milgram theorem~\cite[Theorem~2.7.7]{Brenner-Scott:2008}.
About the regularity of the solution to \eqref{eq:poly:varform}, it is
worth mentioning the result
in~\cite[Corollary~2.21]{Gazzola-Grunau-Sweers:1991}.
Accordingly, if the domain boundary $\partial\Omega$ is
$\CS{k}$-regular for $k\geq 2\pone$ and $\fs\in\HS{k-2\pone}(\Omega)$,
then $\us\in\HS{k}(\Omega)\cap\HS{\pone}_{0}(\Omega)$ and it holds
that $\norm{\us}{k}\leq\Cs\norm{\fs}{k-2\pone}$.
As pointed out in~\cite{Antonietti-Manzini-Scacchi-Verani:2021}, the
regularity of $\us$ for domains with irregular boundaries is still an
open issue.
However, we know that a similar result holds for the biharmonic
problem, i.e., $\pone=2$, if $\Omega$ is a bounded, convex, polygonal
domain see~\cite{Blum-Rannacher:1980}.

\medskip
\section{The discrete problem and an abstract convergence result}
\label{sec3:discrete_pbl}


Let $\rs$ and $\ptwo$ be two integer numbers such that
$\rs\geq\ptwo\geq\pone\geq1$.
The virtual element approximation to the variational
problem~\eqref{eq:poly:varform} reads as
\begin{align}
  \mbox{\emph{Find $\ush\in\Vhrp{r}$ such that:}}\quad
  \ash(\ush,\vsh) = \bil{\fsh}{\vsh}
  \quad\forall \vsh\in\Vhrp{r},
  \label{eq:poly:VEM}
\end{align}
where the virtual element space $\Vhrp{r}$ is a finite-dimensional
conforming subspace of $\Vs$;
$\ash(\cdot,\cdot):\Vhrp{r}\times\Vhrp{r}\to\REAL$ is the virtual
element bilinear form that approximates the bilinear
form~\eqref{eq:a:def}; $\bil{\fsh}{\cdot}:\Vhrp{r}\to\REAL$ is the
continuous linear functional that
approximates~\eqref{eq:poly:rhs:def} through an element $\fsh$ of
the dual space $\Vhrs{r}$ of $\Vhrp{r}$.
The formal definition and properties of $\Vhrp{r}$,
$\ash(\cdot,\cdot)$ and $\fsh$ are discussed in the next section.

\subsection{Mesh notation, mesh regularity and some basic definitions}
The virtual element method is formulated on the mesh family
$\big\{\Th\big\}_{h}$, where each mesh $\Th$ is a partition of the
computational domain $\Omega$ into nonoverlapping polygonal elements
$\P$ and is labeled by the mesh size parameter $\hh$ that is defined
below.
A polygonal element $\P$ is a compact subset of $\REAL^2$ with
boundary $\partial\P$, area $\mP$, center $\xvP$, and diameter
$\hP=\sup_{\xv,\yv\in\P}\vert\xv-\yv\vert$.
The mesh elements of $\Th$ form a finite cover of $\Omega$ such that
$\overline{\Omega}=\cup_{\P\in\Th}\P$ and the mesh size labeling each
mesh $\Th$ is defined by $\hh=\max_{\P\in\Th}\hP$.
A mesh edge $\E$ has center $\xvE$ and length $\hE$ and we denote the
set of mesh edges by $\calE_{\hh}$.
A mesh vertex $\vrtx$ has position vector $\xvV$ and we denote the set
of mesh vertices by $\calV_{\hh}$.
Moreover, in the definition of the degrees of freedom of the next
section, we associate every vertex $\vrtx$ with a characteristic
lenght $\hV$.
This characteristic lenght $\hV$ can be the average of the diameters
of the polygons sharing $\V$.

\medskip
For any integer number $\ell\geq0$, we let $\PS{\ell}(\P)$ and
$\PS{\ell}(\E)$ denote the space of polynomials defined on $\P$ and
$\E$, respectively, and $\PS{\ell}(\Th)$ denotes the space of
piecewise polynomials of degree $\ell$ on the mesh $\Th$.
Accordingly, if $\qs\in\PS{\ell}(\Th)$ then it holds that
$\restrict{\qs}{\P}\in\PS{\ell}(\P)$ for all $\P\in\Th$.
Finally, we define the (broken) seminorm of a function
$\vs\in\prod_{\P\in\Th}\HS{\pone}(\P)$ by
\begin{align*}
  \norm{\vs}{\hh}^2=\sum_{\P\in\Th}\asP(\vs,\vs).
\end{align*}
Throughout the paper, we use the multi-index notation, so that
$\nu=(\nu_1,\nu_2)$ is a two-dimensional index defined by the two
integer numbers $\nu_1,\nu_2\geq0$.
Moreover,
$\Ds^{\nu}\ws=\partial^{|\nu|}\ws\slash{\partial\xs^{\nu_1}\partial\ys^{\nu_2}}$
denotes the partial derivative of order $|\nu|=\nu_1+\nu_2>0$ of a
given bivariate function $\ws(\xs,\ys)$, and we use the conventional
notation that $\Ds^{(0,0)}\ws=\ws$ for $\nu=(0,0)$.
We denote the partial derivatives of $\ws$ versus $x$ and $y$ by
the shortcuts $\partial_{x}\ws$, $\partial_{y}\ws$,
$\partial_{xx}\ws$, $\partial_{xy}\ws$, $\partial_{yy}\ws$, etc.
We denote the normal and tangential derivatives with respect to a
given edge and their mixed combination by $\partial_{n}\ws$,
$\partial_{t}\ws$, $\partial_{tt}\ws$, $\partial_{nt}\ws$,
$\partial_{nn}\ws$, etc, and use the shorter notation
$\partial^{j}_{t}\ws$ and $\partial^{j}_{n}\ws$ for the tangential and
normal derivatives of $\ws$ of order $j$.

\subsection{Abstract convergence theorem}

For the mathematical formulation of the virtual element
approximation~\eqref{eq:poly:VEM}, we require the two following
assumptions on the virtual element space $\Vhrp{r}$ and the bilinear
form $\ash(\cdot,\cdot)$:
\begin{description}
\item \ASSUM{H1}. For all $\hh>0$, the \emph{global} virtual element
  space $\Vhrp{\rs}$ is a conforming, finite-dimensional subspace of
  $\Vs=\HS{\pone}_0(\Omega)\cap\HS{\ptwo}(\Omega)$ such that for all
  elements $\P$ of all mesh partitions $\Th$ it holds that

  \medskip
  \begin{description}
  \item[-] $\VhPrp{\rs}$, the \emph{local} (elemental) virtual element
    space that is defined as the restriction of $\Vhrp{\rs}$ to the
    element $\P$ is a finite-dimensional subspace of $H^{\ptwo}(\P)$;

    \medskip
  \item[-] $\PS{\rs}(\P)$, the space of polynomials of degree up to
    {$r$} defined on $\P$ is a subspace of $\VhPrp{\rs}$.
  \end{description}

  \medskip
\item \ASSUM{H2}. The bilinear form
  $\ash(\cdot,\cdot):\Vhrp{\rs}\times\Vhrp{\rs}\to\REAL$ admits the
  elementwise decomposition
  \begin{align*}
    \ash(\ush,\vsh) = \sum_{\P\in\Th}\ashP(\ush,\vsh)
    \quad\forall \ush,\,\vsh\in\Vhrp{\rs},
  \end{align*}
  where for all element $\P$ the local bilinear form
  $\ashP(\cdot,\cdot)$ is symmetric and such that

  \medskip
  \begin{description}
  \item\textbf{($r$-Consistency)}: for every polynomial
    $\qs\in\PS{\rs}(\P)$ and every virtual element function
    $\vsh\in\Vhrp{\rs}(\P)$ it holds that
    \begin{align}
      \ashP(\vsh,\qs) = \asP(\vsh,\qs);
      \label{eq:poly:r-consistency}
    \end{align}

    \medskip
  \item\textbf{(Stability)}: there exist two positive constants
    $\alpha_*$, $\alpha^*$ independent of $h$ and $\P$ such that for
    every $\vsh\in\VhPrp{\rs}$ it holds that
    \begin{align}
      \alpha_*\asP(\vsh,\vsh)
      \leq\ashP(\vsh,\vsh)\leq
      \alpha^*\asP(\vsh,\vsh).
      \label{eq:poly:stability}
    \end{align}
    The stability constant $\alpha_{*}$ and $\alpha^{*}$ may depend on
    the polynomial approximation degree $\rs$, see, e.g.,
    \cite{Antonietti-Mascotto-Verani:2018} for the case $\pone=1$.
  \end{description}
\end{description}
Assumption \ASSUM{H2} implies that the symmetric bilinear form
$\ash(\cdot,\cdot)$ is coercive and continuous, so that the existence
and uniqueness of the solution $\ush$ follows from an application of
the Lax-Milgram theorem~\cite[Theorem~2.7.7]{Brenner-Scott:2008}.
Under these assumptions we can prove this abstract convergence result.

\medskip
\begin{theorem}
  \label{theorem:poly:abstract:energy:norm}
  Let $\us\in\Vs$ be the solution to the variational
  problem~\eqref{eq:poly:strong} and $\ush\in\Vhrp{\rs}$,
  $\rs\geq\ptwo\geq\pone\geq1$, the solution to the virtual element
  approximation~\eqref{eq:poly:VEM} under
  assumptions~\ASSUM{H1}-\ASSUM{H2}.
  Then, there exists a constant $\Cs$ independent of $\hh$ such that 
  \begin{align}
    \norm{\us-\ush}{\Vs}\leq\Cs
    \Big(
    \norm{\us-\usI}{\Vs} + \norm{\us-\us_{\pi}}{h} + \norm{\fh-\fs}{\Vhrs{\rs}}
    \Big),
    \label{eq:poly:abstract:energy:norm}
  \end{align}
  for every virtual element approximation $\usI$ in $\Vhrp{\rs}$ and
  any piecewise polynomial approximation $\us_{\pi}\in\PS{\rs}(\Th)$
  of $\us$.
  The constant $\Cs$ is proportional to
  $\big(\Ms\slash{\alpha}\big)\,\big(\alpha^{*}\slash{\alpha_{*}}\big)$.
\end{theorem}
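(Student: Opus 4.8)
The plan is to run a Strang-type (C\'ea-like) argument. First I would set $\delta_{\hh}:=\ush-\usI\in\Vhrp{\rs}$ and reduce the estimate, via the triangle inequality $\norm{\us-\ush}{\Vs}\le\norm{\us-\usI}{\Vs}+\norm{\delta_{\hh}}{\Vs}$, to bounding $\norm{\delta_{\hh}}{\Vs}$. Because $\delta_{\hh}$ is a \emph{conforming} function in $\Vs$, the broken seminorm and the energy norm coincide on it, $\norm{\delta_{\hh}}{\hh}^2=\sum_{\P\in\Th}\asP(\delta_{\hh},\delta_{\hh})=\norm{\delta_{\hh}}{\Vs}^2$, so I may pass freely between the two. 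The starting point is then the stability lower bound in \eqref{eq:poly:stability}, summed over elements: $\alpha_*\norm{\delta_{\hh}}{\Vs}^2\le\ash(\delta_{\hh},\delta_{\hh})$.

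The core step is an algebraic rewriting of $\ash(\delta_{\hh},\delta_{\hh})$. Using the discrete equation \eqref{eq:poly:VEM} with test function $\delta_{\hh}$, I write $\ash(\delta_{\hh},\delta_{\hh})=\bil{\fsh}{\delta_{\hh}}-\ash(\usI,\delta_{\hh})$. On each element I then insert the piecewise polynomial $\usp$ and exploit symmetry together with $\rs$-consistency \eqref{eq:poly:r-consistency}: since $\restrict{\usp}{\P}\in\PS{\rs}(\P)$ and $\delta_{\hh}\in\Vhrp{\rs}(\P)$, one has $\ashP(\usp,\delta_{\hh})=\ashP(\delta_{\hh},\usp)=\asP(\delta_{\hh},\usp)$, which eliminates the non-computable discrete form on the polynomial part. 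Simultaneously I test the continuous problem \eqref{eq:poly:varform} against $\delta_{\hh}\in\Vs$ to get $\bil{\fs}{\delta_{\hh}}=\sum_{\P\in\Th}\asP(\us,\delta_{\hh})$. Collecting terms yields the identity
\begin{align*}
  \ash(\delta_{\hh},\delta_{\hh})
  = \bil{\fsh-\fs}{\delta_{\hh}}
  + \sum_{\P\in\Th}\asP(\us-\usp,\delta_{\hh})
  - \sum_{\P\in\Th}\ashP(\usI-\usp,\delta_{\hh}),
\end{align*}
whose three right-hand contributions correspond exactly to the loading error, the polynomial consistency error, and the interpolation/stability error appearing in \eqref{eq:poly:abstract:energy:norm}.

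It then remains to bound the three terms. The first is controlled by the definition of the dual norm, $\bil{\fsh-\fs}{\delta_{\hh}}\le\norm{\fsh-\fs}{\Vhrs{\rs}}\norm{\delta_{\hh}}{\Vs}$. For the second I apply the Cauchy--Schwarz inequality to the symmetric positive semidefinite form $\asP(\cdot,\cdot)$ elementwise and then a discrete Cauchy--Schwarz over $\P$, obtaining $\sum_{\P\in\Th}\asP(\us-\usp,\delta_{\hh})\le\norm{\us-\usp}{\hh}\norm{\delta_{\hh}}{\hh}$. For the third, I first note that both $\usI-\usp$ and $\delta_{\hh}$ belong to $\Vhrp{\rs}(\P)$ (using $\PS{\rs}(\P)\subset\Vhrp{\rs}(\P)$ from \ASSUM{H1}), so the stability upper bound in \eqref{eq:poly:stability} applies and gives $\ashP(\usI-\usp,\delta_{\hh})\le\alpha^*\,\big(\asP(\usI-\usp,\usI-\usp)\big)^{1/2}\big(\asP(\delta_{\hh},\delta_{\hh})\big)^{1/2}$; summing and using $\norm{\usI-\usp}{\hh}\le\norm{\us-\usI}{\Vs}+\norm{\us-\usp}{\hh}$ completes this estimate. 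Dividing through by $\norm{\delta_{\hh}}{\Vs}$, collecting constants, and substituting back into the triangle inequality produces \eqref{eq:poly:abstract:energy:norm}; tracking the factors shows $\Cs$ scales like $(\Ms/\alpha)(\alpha^*/\alpha_*)$, the quotient $\Ms/\alpha$ entering when the energy norm is compared with the ambient Sobolev norm in which continuity and coercivity of $\as_{\pone}$ are measured.

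I do not expect a genuine obstacle here: the result is a standard abstract VEM estimate and the argument is essentially bookkeeping. The one step demanding care is the polynomial-insertion identity, where symmetry of $\ashP(\cdot,\cdot)$ must be combined with $\rs$-consistency in the correct slot so that the non-computable discrete form is cancelled on $\PS{\rs}(\P)$. The other delicate point is checking that every argument fed into \eqref{eq:poly:stability} actually lies in the local virtual element space, which is precisely what the polynomial-inclusion part of \ASSUM{H1} guarantees.
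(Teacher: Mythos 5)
Your proposal is correct and is essentially the same argument behind this theorem: note that the paper itself states the result without proof and defers to \cite{Antonietti-Manzini-Verani:2019} and \cite{Antonietti-Manzini-Scacchi-Verani:2021}, where exactly this Strang-type reasoning is carried out --- set $\delta_{\hh}=\ush-\usI$, use coercivity of $\ash$, insert $\us_{\pi}$ elementwise with symmetry plus $\rs$-consistency to trade $\ashP$ for $\asP$ on the polynomial part, and test the continuous problem with the conforming function $\delta_{\hh}$ to produce the three error contributions. Your bookkeeping is sound, including the one point that deserves the care you gave it: the cross bound $\ashP(\usI-\us_{\pi},\delta_{\hh})\leq\alpha^*\big(\asP(\usI-\us_{\pi},\usI-\us_{\pi})\big)^{1/2}\big(\asP(\delta_{\hh},\delta_{\hh})\big)^{1/2}$ follows from Cauchy--Schwarz for the symmetric positive semidefinite form $\ashP$ combined with the diagonal stability bound \eqref{eq:poly:stability}, which is legitimate since \ASSUM{H1} guarantees both arguments lie in $\VhPrp{\rs}$.
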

The proof of this theorem was first published
in~\cite{Antonietti-Manzini-Verani:2019} for the case with
$\ptwo=\pone$ and then extended to the case for $\ptwo\geq\pone$
in~\cite{Antonietti-Manzini-Scacchi-Verani:2021}.
\section{The virtual element spaces of higher-order continuity}
\label{sec4:VEM}

In this section, we present the formulation of the virtual eleemnt
method of Eq.~\eqref{eq:poly:VEM}.
To this end, we first introduce the local virtual element spaces, the
degrees of freedom, and the global virtual element space, which is
obtained by ``gluing'' in a conforming way the local spaces.
Then, we discuss the computability of the elliptic projection
operator, and the enhancement of the local spaces, which allows us to
compute the orthogonal projection operators onto the subspace of
polynomials of degree up to $\pone-1$.
Finally, we discuss the construction of the bilinear form
$\ash(\cdot,\cdot)$ and the load term $\bil{\fsh}{\cdot}$.

\subsection{Local space definitions}

Let $\P$ be a mesh element.
For $\ptwo\leq\rs\leq 2\ptwo-2$, we consider the local virtual element
space defined as
\begin{multline}
  \label{eq:vem-space-lower}
  \VhPrp{\rs} = \Big\{
  \vsh\in\HS{\ptwo}(\P):\,
  \Delta^{\ptwo}\vsh\in\PS{\rs-2\pone}(\P),\,
  \partial^{\js}_n\vsh\in\PS{\alpha_j(\ptwo,\rs)}(\E),\,\\
  \,\js=0,\ldots,\ptwo-1~\forall\E\in\partial\P
  \Big\},
\end{multline}
where $\alpha_j(\ptwo,\rs)=\max\{2(\ptwo-\js)-1,\rs-\js\}$.
For $\rs\geq 2\ptwo-1$ it holds that $\alpha_j=\rs-\js$ for all
$\js=0,\ldots,\ptwo-1$ and the definition of the local virtual element
space on the element $\P$ takes the simpler form
\begin{multline}
  \label{eq:vem-space-higher}
  \VhPrp{\rs} = \Big
  \{\vsh\in\HS{\ptwo}(\P):\,
  \Delta^{\ptwo}\vsh\in\PS{\rs-2\pone}(\P),\,
  \partial^{\js}_n\vsh\in\PS{\rs-\js}(\E),\,\\
  \js=0,\ldots,\ptwo-1~\forall
  \E\in\partial\P
  \Big\}.
\end{multline}
In both definitions~\eqref{eq:vem-space-lower}
and~\eqref{eq:vem-space-higher} we use the conventional notation that
$\PS{r}(\P)=\{0\}$ if $r<0$.

\begin{remark}
  The space of polynomials $\PS{\rs}(\P)$ is a subspace of
  $\VhPrp{\rs}$ for both definitions~\eqref{eq:vem-space-lower}
  and~\eqref{eq:vem-space-higher}.
\end{remark}

\begin{remark}
  \label{remark:dim:local:space}
  Let $\CARD{\calP}$ denote the cardinality of a (finite dimensional)
  space $\calP$ and $\NPE$ and $\NPV$ the number of edges and vertices
  of element $\P$.
  The dimension of the local virtual element
  space~\eqref{eq:vem-space-lower} is given by
  \begin{align}
    \textrm{dim}\,\VhPrp{\rs}
    &= \CARD{\PS{\rs-2\pone}(\P)}
    + \sum_{\E\in\partial\P}\sum_{j=0}^{\ptwo-1}\CARD{\PS{\alpha_{\js}(\ptwo,\rs)}(\E)}
    - \NPV\frac{(\ptwo+1)\ptwo}{2}
    \nonumber\\[0.4em]
    &= \frac{(\rs-2\pone+1)(\rs-2\pone+2)}{2}
    + \NPE\sum_{j=0}^{\ptwo-1}\big(\alpha_j(\ptwo,\rs)+1\big)
    \nonumber\\[0.4em]
    &\phantom{=}
    - \NPV\frac{(\ptwo+1)\ptwo}{2}.
    \label{eq:Ndofs:low:def}
  \end{align}
  The dimension of the local virtual element
  space~\eqref{eq:vem-space-higher} is given by
  \begin{align}
    \textrm{dim}\,\VhPrp{\rs}
    &= \CARD{\PS{\rs-2\pone}(\P)}
    + \sum_{\E\in\partial\P}\sum_{j=0}^{\ptwo-1}\CARD{\PS{\rs-\js}(\E)}
    - \NPV\frac{(\ptwo+1)\ptwo}{2}
    \nonumber\\[0.4em]
    &= \frac{(\rs-2\pone+1)(\rs-2\pone+2)}{2}
    + \NPE\frac{\ptwo\big(2\rs+3-\ptwo\big)}{2}
    \nonumber\\[0.4em]
    &\phantom{=}
    - \NPV\frac{(\ptwo+1)\ptwo}{2}.
    \label{eq:Ndofs:high:def}
  \end{align}
  
  In both equations~\eqref{eq:Ndofs:low:def}
  and~\eqref{eq:Ndofs:high:def}, the last term of the right-hand side,
  i.e., $\NPV\ptwo(\ptwo+1)/2$, is subtracted to take into account the
  $\CS{\ptwo-1}$-regularity of $\vsh$ at the elemental vertices.
\end{remark}

\subsection{Local degrees of freedom}
\label{subsec:degrees-of-freedom}

Let $\beta_j=\alpha_j-\min\{2(\ptwo-j)-1,\rs-j\}-1$.
For $\rs=2\ptwo-1-k$ with $k=1,\ldots,\ptwo-1$, the virtual element
functions in the elemental space \eqref{eq:vem-space-lower} are
uniquely identified by the following degrees of freedom
\begin{description}
\item\TERM{D1} $\hV^{|\nu|}D^{\nu}\vsh(\V)$, $\ABS{\nu}\leq \ptwo-1$
  for any vertex $\V$ of $\partial\P$;
  
  \medskip
\item\TERM{D2}
  $\displaystyle\hE^{-1+j}\int_{\E}\qs\partial_{n}^j\vsh\dS$ for any
  $\qs\in\PS{\beta_j}(\E)$ and edge $\E$ of $\partial\P$,
  $\js=\ks+1,\ldots,\ptwo-1$;

  \medskip
\item\TERM{D3} $\displaystyle\hP^{-2}\int_{\P}\qs\vsh\dx$ for any
  $\qs\in\PS{r-2\pone}(\P)$.
\end{description}
For $\rs\geq2\ptwo-1$, we note that $\beta_j=\rs-(2\ptwo-\js)$ and we
consider the polynomial edge moments in \TERM{D2} for
$\js=0,\ldots,\ptwo-1$.

\begin{remark}
  \label{rem:ortho:proj:r-2p1}
  The $\LS{2}$-projection operator $\PizP{\rs-2\pone}$ onto the
  polynomial space $\PS{\rs-2\pone}(\P)$ is computable from the
  degrees of freedom \TERM{D3}.
\end{remark}

A virtual element function in $\vsh\in\VhPrp{\rs}$ has the regularity
property that
$\restrict{(\vsh)}{\partial\P}\in\CS{\ptwo-1}(\partial\P)$.
This regularity is reflected by the choice of the degrees of freedom,
and is, indeed, provided by the vertex degrees of freedom of
\TERM{D1}.
Furthermore, the traces of $\vsh$ ($j=0$) and the $j$-th normal
derivatives (up to order $j=\ptwo-1$) on every edge $\E\in\partial\P$
are univariate polynomials of degree at least $\rs-\js$.
The information provided by \TERM{D1} makes it possible to build
polynomial traces of degree higher than $\rs-\js$ if $\rs$ is equal to
$\ptwo$ (or not ``too bigger'' than $\ptwo$ as shown in the following
examples).
In such a case, we can compute the edge traces $\restrict{(\vsh)}{\E}$
and $\restrict{(\partial^j_n\vsh)}{\E}$ by solving the interpolation
problem that uses the vertex values of $\vsh$ and its partial
derivatives.

In the next three examples, we discuss the trace interpolation problem
for $\rs\geq\ptwo$ and $\js=0$, $\js=1$, $\js\geq2$.
This process is also shown in Table~\ref{tab:trace:polynomial:order}
and Fig.~\ref{fig:dofs}

\begin{example}[$\js=0$]
  We derive the higher-order \emph{tangential} derivatives of $\vsh$
  by repetitively applying the differential operator
  $\tv\cdot\nabla=\big(\tsx\partial_x+\tsy\partial_y\big)$ to the
  univariate polynomial trace of $\vsh$, i.e.,
  $\partial^{\ell}_{t}\vsh$ along every elemental edge (recall that
  $\partial^0_t\vsh=\vsh$ for $\ell=0$).
  For example, for $\ell=1,2,3,4$, the tangential derivatives
  $\partial^{\ell}_{t}\vsh$ are given by
  \begin{align*}
    \partial_{t}\vsh(\V_i) &= \tsx\partial_x(\vsh)(\V_i) + \tsy\partial_y(\vsh)(\V_i),\\[0.5em]
    \partial^2_{t}\vsh(\V_i)
    &= \big(\tsx\partial_x+\tsy\partial_y\big)\big(\partial_{t}\vsh\big)(\V_i)\\[0.5em]
    &= \tsx\partial_x\big(\partial_t\vsh\big)(\V_i) + \tsy\partial_y\big(\partial_t\vsh\big)(\V_i)\\[0.5em]
    &= \tsx\tsx\partial_{xx}\vsh(\V_i) + 2\tsx\tsy\partial_{xy}\vsh(\V_i) + \tsy\tsy\partial_{yy}\vsh(\V_i),\\[0.5em]
    \partial^3_{t}\vsh(\V_i)
    &= \big(\tsx\partial_x+\tsy\partial_y\big)\big(\partial^2_{t}\vsh\big)(\V_i)\\[0.5em]
    &= \tsx\partial_x\big(\partial^2_{t}\vsh\big)(\V_i) + \tsy\partial_y\big(\partial^2_{t}\vsh\big)(\V_i)\\[0.5em]
    &= \tsx\tsx\tsx\partial_{xxx}\vsh(\V_i) + 3\tsx\tsx\tsy\partial_{xxy}\vsh(\V_i) + 3\tsx\tsy\tsy\partial_{xyy}\vsh(\V_i)\\[0.5em]
    &\phantom{=} 
    + \tsy\tsy\tsy\partial_{yyy}\vsh(\V_i),\\[0.5em]
    \partial^4_{t}\vsh(\V_i)
    &= \big(\tsx\partial_x+\tsy\partial_y\big)\big(\partial^3_{t}\vsh\big)(\V_i)\\[0.5em]
    &= \tsx\partial_x\big(\partial^3_{t}\vsh\big)(\V_i) + \tsy\partial_y\big(\partial^3_{t}\vsh\big)(\V_i)\\[0.5em]
    &
    =  \tsx\tsx\tsx\tsx\partial_{xxxx}\vsh(\V_i)
    + 4\tsx\tsx\tsx\tsy\partial_{xxxy}\vsh(\V_i)
    + 6\tsx\tsx\tsy\tsy\partial_{xxyy}\vsh(\V_i)\\[0.5em]
    &\phantom{=}
    + 4\tsx\tsy\tsy\tsy\partial_{xyyy}\vsh(\V_i)
    +  \tsy\tsy\tsy\tsy\partial_{yyyy}\vsh(\V_i).
  \end{align*}
  It is easy to recognize the pattern of the combinatorial
  coefficients in these expansions.
  According to the third column ($\js=0$) of
  Table~\ref{table:dofs:D1}, the degrees of freedom \TERM{D1} at
  vertex $\V_i$ for a given regularity index $\ptwo$ yield $\ptwo$
  pieces of information, $\vsh(\V_i)$, $\partial_t\vsh(\V_i)$,
  $\partial^2_t\vsh(\V_i)$, \ldots $\partial^{\ptwo-1}_t\vsh(\V_i)$.
  Since each edge has two vertices, we have $2\ptwo$ pieces of
  information and we can interpolate the edge trace of $\vsh$ as a
  univariate polynomial of degree $2\ptwo-1$.
  Such polynomial degree is clearly bigger than $\rs$ if we choose
  $\rs$ such that $\ptwo\leq\rs<2\ptwo-1$.
  This fact is not in conflict with the property that the virtual
  element space contains the subspace of polynomials of degree $\rs$.
  
  The polynomial degrees of the edge trace of $\vsh$ that we can
  interpolate from the degrees of freedom \TERM{D1}-\TERM{D2} are
  illustrated in Table~\ref{tab:trace:polynomial:order} by the rows
  for $j=0$ and different values of $\ptwo$.
  In this table, the values of $\rs$ such that $\rs=2\ptwo-1$ are
  reported in bold, and the ones for $\rs<2\ptwo-1$ are those
  preceeding the bold ones on the same row.
  For these values of $\rs$ the trace of $\vsh$ can be interpolated
  from the information provided by \TERM{D1}.
  However, if we increase the polynomial degree $\rs$ so that
  $\rs>2\ptwo-1$, the degrees of freedom \TERM{D1} are no longer
  enough to solve the interpolation problem.
  In such a case, we need the additional degrees of freedom of
  \TERM{D2}, i.e., the moments of $\vsh$ against a (basis of)
  polynomials of degree $\rs-(2\ptwo-1)$ defined on $\E$.
  \ENDPROOF
\end{example}

\begin{example}[$j=1$]
  As for the case $j=0$, we derive the higher-order \emph{tangential}
  derivatives of
  $\partial_n\vsh=\nsx\partial_x\vsh+\nsy\partial_y\vsh$ by
  repetitively applying the differential operator
  $\tv\cdot\nabla=\big(\tsx\partial_x+\tsy\partial_y\big)$ to the
  univariate polynomial trace of $\partial_n\vsh$, i.e,
  $\partial^{\ell}_{t}\partial_n\vsh$ along every elemental edge
  (recall that $\partial^0_t\partial_n\vsh=\partial_n\vsh$ for
  $\ell=0$).
  For example, for $\ell=1,2,3$ we find that
  \begin{align*}
    \partial_{t}\partial_n\vsh(\V_i)
    &= \big(\tsx\partial_x+\tsy\partial_y\big)\big(\partial_{n}\vsh\big)(\V_i)\\[0.5em]
    &= \tsx\partial_x\big(\partial_n\vsh\big)(\V_i) + \tsy\partial_y\big(\partial_n\vsh\big)(\V_i)\\[0.5em]
    &= \tsx\nsx\partial_{xx}\vsh(\V_i) + (\tsx\nsy+\tsy\nsx)\partial_{xy}\vsh(\V_i) + \tsy\tsy\partial_{yy}\vsh(\V_i),\\[0.5em]
    \partial^2_{t}\partial_n\vsh(\V_i)
    &= \big(\tsx\partial_x+\tsy\partial_y\big)\big(\partial_{t}\partial_{n}\vsh\big)(\V_i)\\[0.5em]
    &= \tsx\partial_x\big(\partial_{t}\partial_n\vsh\big)(\V_i) + \tsy\partial_y\big(\partial_{t}\partial_n\vsh\big)(\V_i)\\[0.5em]
    &= \tsx\tsx\nsx\partial_{xxx}\vsh(\V_i)
    + \big(\tsx\big(\tsx\nsy+\tsy\nsx\big)+\tsy\big(\tsx\nsy+\tsy\nsx\big)\big)\partial_{xxy}\vsh(\V_i)\\[0.5em]
    &\quad
    + \tsx\tsy\big(\nsx+\nsy\big)\partial_{xyy}\vsh(\V_i)
    + \tsy\tsy\nsy\partial_{yyy}\vsh(\V_i),\\[0.5em]
    \partial^3_{t}\partial_n\vsh(\V_i)
    &= \big(\tsx\partial_x+\tsy\partial_y\big)\big(\partial^2_{t}\partial_{n}\vsh\big)(\V_i)\\[0.5em]
    &= \big(\tsx\partial_x+\tsy\partial_y\big)\big(\partial^2_{n}\vsh\big)(\V_i)\\[0.5em]
    &= \tsx\partial_x\big( \partial^2_{t}\partial_n\vsh(\V_i) \big)(\V_i) + \tsy\partial_y\big( \partial^2_{t}\partial_n\vsh(\V_i) \big)(\V_i),\\[0.5em]
    &= \ldots
  \end{align*}
  According to the fourth column ($\js=1$) of
  Table~\ref{table:dofs:D1}, the degrees of freedom \TERM{D1} at
  vertex $\V_i$ for a given $\ptwo$ yield $\ptwo-1$ pieces of
  information, $\partial_n\vsh(\V_i)$,
  $\partial_t\partial_n\vsh(\V_i)$,
  $\partial^2_t\partial_n\vsh(\V_i)$, \ldots
  $\partial^{\ptwo-2}_t\partial_n\vsh(\V_i)$.
  Since each edge has two vertices, we have $2(\ptwo-1)$ pieces of
  information that we can interpolate as a polynomial of degree
  $2\ptwo-3$.
  Such polynomial degree is clearly bigger than $\rs-1$ if we choose
  $\rs$ such that $\ptwo\leq\rs<2(\ptwo-1)$.

  The polynomial degrees of the edge trace of $\partial_n\vsh$ that we
  can interpolate from the degrees of freedom \TERM{D1}-\TERM{D2} are
  illustrated in Table~\ref{tab:trace:polynomial:order} by the rows
  for $j=1$ and different values of $\ptwo$.
  In this table, the values of $\rs$ such that $\rs-1=2(\ptwo-1)-1$
  are reported in bold, and the ones for $\rs-1<2(\ptwo-1)-1$ are
  those preceeding the bold ones on the same row.
  For these values of $\rs$ the trace of $\partial_n\vsh$ can be
  interpolated from the information provided by \TERM{D1}.
  However, if we increase the polynomial degree $\rs$ so that
  $\rs-1>2(\ptwo-1)-1$, the degrees of freedom \TERM{D1} are no longer
  enough to solve the interpolation problem.
  In such a case, we need the additional degrees of freedom of
  \TERM{D2}, i.e., the moments of $\partial_n\vsh$ against a (basis
  of) polynomials of degree $\rs-2(\ptwo-1)$ defined on $\E$.
  \ENDPROOF
\end{example}

\begin{example}[$j\geq2$]
  As for the cases $j=0$ and $\js=1$, we derive the higher-order
  tangential derivatives of $\partial^j_n\vsh$ by repetitively
  applying the differential operator
  $\tv\cdot\nabla=\big(\tsx\partial_x+\tsy\partial_y\big)$ to the
  univariate polynomial trace of $\partial^j_n\vsh$, i.e.,
  $\partial^{\ell}_{t}\partial^j_n\vsh$ along every elemental edge.
  For example, for $\js=2$, since
  $\partial^2_n\vsh=\nsx\nsx\partial_{xx}\vsh+2\nsx\nsy\partial_{xy}\vsh+\tsy\tsy\partial_{yy}\vsh$,
  we find that
  \begin{align*}
    \partial_{t}\partial^2_n\vsh(\V_i)
    &= \big(\tsx\partial_x+\tsy\partial_y\big)\big(\partial^2_{n}\vsh\big)(\V_i)\\[0.5em]
    &= \tsx\partial_x\big(\partial^2_n\vsh\big)(\V_i) + \tsy\partial_y\big(\partial^2_n\vsh\big)(\V_i)\\[0.5em]
    &= \tsx\nsx\nsx\partial_{xxx}\vsh
    + (2\tsx\tsx\nsx+\tsy\nsx\nsx)\partial_{xxy}\vsh\\[0.5em]
    &+ (\tsx\nsy\nsy+2\tsy\nsx\nsy)\partial_{xyy}\vsh
    + \tsy\nsy\nsy\partial_{yyy}\vsh.
  \end{align*}
  For $\js\geq0$, each edge vertex $\V_i$, $i=1,2$, provides the
  values of $\partial^j_n\vsh$ and its first $\ptwo-\js-1$ tangential
  derivatives.
  Hence, there are $2(\ptwo-\js)$ pieces of information available on
  each edge and we can interpolate the edge trace of
  $\partial^j_n\vsh$ as a univariate polynomial of degree
  $2(\ptwo-\js)-1$.
  Such polynomial degree is clearly bigger than $\rs-\js$ if we choose
  $\rs$ such that $\ptwo\leq\rs<2\ptwo-\js-1$.
  
  The polynomial degrees of the edge trace of $\partial_n^j\vsh$ that
  we can interpolate from the degrees of freedom \TERM{D1}-\TERM{D2}
  are illustrated in Table~\ref{tab:trace:polynomial:order} by the
  rows for $j\geq0$ and different values of $\ptwo$.
  In this table, the values of $\rs$ such that $\rs=2\ptwo-\js-1$ are
  reported in bold, and the ones for $\rs<2\ptwo-\js-1$ are those
  preceeding the bold ones on the same row.
  For these values of $\rs$ the trace of $\partial_n^j\vsh$ can be
  interpolated from the information provided by \TERM{D1}.
  However, if we increase the polynomial degree $\rs$ so that
  $\rs>2\ptwo-\js-1$, the degrees of freedom \TERM{D1} are no longer
  enough to solve the interpolation problem.
  In such a case, we need the additional degrees of freedom of
  \TERM{D2}, i.e., the moments of $\partial_n^j\vsh$ against a (basis
  of) polynomials of degree $\rs-(2\ptwo-\js-1)$ defined on $\E$.
  It is worth noting that for increasing values of $\rs$, we need to
  supplement this information starting from the higher-order normal
  derivatives (see Fig.~\ref{fig:dofs}).
  \ENDPROOF
\end{example}


\newcommand{\hspc}{\hspace{1.25mm}}
\renewcommand{\arraystretch}{1.75}
\renewcommand{\TABROW} [5]{ \hspc#1\hspc\hspc & \hspc#2\hspc\hspc & \hspc#3\hspc\hspc & \hspc#4\hspc\hspc & \hspc#5\hspc\\}
\newcommand{\TABMROW}[6]{ \multirow{#1}{*}{\hspc#2\hspc\hspc} & \multirow{#1}{*}{\hspc#3\hspc\hspc} & \multirow{#1}{*}{\hspc#4\hspc\hspc} & \multirow{#1}{*}{\hspc#5\hspc\hspc} & \multirow{#1}{*}{\hspc#6\hspc}\\}
\begin{table}[t]
  \centering
  \begin{tabular}{c|c|c|c|c|c|c}
    \hspc$\ptwo-1$\hspc\hspc & \hspc Degrees of freedom \TERM{D1}\hspc & \TABROW{$\js=0$}{$\js=1$}{$\js=2$}{$\js=3$}{$\js=4$}
    \hline\hline
    $0$ &
    $\vsh(\V_i)$ &
    \TABROW{$\vsh$}{$--$}{$--$}{$--$}{$--$}
    \hline
    $1$ &
    $\partial_x\vsh(\V_i),\partial_y\vsh(\V_i)$ &
    \TABROW{$\partial_t\vsh$}{$\partial_n\vsh$}{$--$}{$--$}{$--$}
    \hline
    \multirow{2}{*}{$2$} &
    \multirow{2}{*}{\shortstack[1]{$\partial_{xx}\vsh(\V_i),\partial_{xy}\vsh(\V_i)$,\\ $\partial_{yy}\vsh(\V_i)$}} &
    \TABMROW {2}{$\partial^2_t\vsh$}{$\partial_t\partial_n\vsh$}{$\partial^2_n\vsh$}{$--$}{$--$}
    {}  & {} & \TABROW{}{}{}{}{}
    \hline
    \multirow{2}{*}{$3$} &
    \multirow{2}{*}{\shortstack[1]{$\partial_{xxx}\vsh(\V_i),\partial_{xxy}\vsh(\V_i)$,\\  $\partial_{xyy}\vsh(\V_i),\partial_{yyy}\vsh(\V_i)$}} &
    \TABMROW {2}{$\partial^3_t\vsh$}{$\partial^2_t\partial_n\vsh$}{$\partial_t\partial^2_n\vsh$}{$\partial^3_n\vsh$}{$--$}
    {}  & {} & \TABROW{}{}{}{}{}
    \hline
    \multirow{3}{*}{$4$} & 
    \multirow{3}{*}{\shortstack[1]{$\partial_{xxxx}\vsh(\V_i),\partial_{xxxy}\vsh(\V_i)$,\\ $\partial_{xxyy}\vsh(\V_i),\partial_{xyyy}\vsh(\V_i)$, \\ $\partial_{yyyy}\vsh(\V_i)$}} &
    \TABMROW {3}{$\partial^4_t\vsh$}{$\partial^3_t\partial_n\vsh$}{$\partial^2_t\partial^2_n\vsh$}{$\partial_t\partial^3_n\vsh$}{$\partial^4_n\vsh$}
    {}  & {} & \TABROW{}{}{}{}{}
    {}  & {} & \TABROW{}{}{}{}{}
    \hline
  \end{tabular}
  \vspace{0.5cm}
  \caption{Vertex degrees of freedom for the trace interpolation
    process on the elemental edges.
    The first column shows the value of $\max\{\ABS{\nu}\}=\ptwo-1$
    that we use to define the degrees of freedom \TERM{D1}.
    The second column shows the degrees of freedom at vertex $\V_i$
    corresponding to $\ptwo-1$ in the first column.
    The remaining columns shows the quantities that we can compute
    using the degrees of freedom listed in the second column.
    Recalling that $\ABS{\nu}\leq\ptwo-1$ and $0\leq\js\leq\ptwo-1$,
    on the columns for $\js=0,\ldots,4$ we read the pieces of
    information that are available for the interpolation of
    $\partial^j_n\vsh$ (with $\partial^0_n\vsh=\vsh$ for $\js=0$).
    For example, if $\ptwo=3$, we can use only the objects of the
    first three table rows (i.e., $\ABS{\nu}=0,1,2$), and each column
    for $\js=0,1,2$ lists the pieces of information that are available
    to construct the polynomial trace of $\vsh$, $\partial_n\vsh$ and
    $\partial^2_n\vsh$ on each edge.
    In such a case, the vertex degrees of freedom allows us to
    interpolate the trace of $\vsh$ as a polynomial of degree $5$, the
    trace of $\partial_n\vsh$ as a polynomial of degree $3$, and the
    trace of $\partial^2_n\vsh$ as a polynomial of degree $1$.
    These trace interpolations are consistent with $\rs=\ptwo$ and a
    global virtual element space with $\CS{\ptwo-1}$-regularity on
    $\Omega$. }
  \label{table:dofs:D1}
\end{table}
\renewcommand{\arraystretch}{1}
\renewcommand{\TABROW}[9]{
  $#1$\,\, & \,\,$#2$\,\, & \,\,$#3$\,\, & \,\,$#4$\,\, & \,\,$#5$\,\, & \,\,$#6$\,\, & \,\,$#7$\,\, & \,\,$#8$\,\, & \,\,$#9$ &\ldots\\
}

\newcommand{\TABRBF}[9]{
  $\mathbf{#1}$\,\, & \,\,$\mathbf{#2}$\,\, & \,\,$\mathbf{#3}$\,\, & \,\,$\mathbf{#4}$\,\, & \,\,$\mathbf{#5}$\,\, & \,\,$\mathbf{#6}$\,\, & \,\,$\mathbf{#7}$\,\, & \,\,$\mathbf{#8}$\,\, & \,\,$\mathbf{#9}$ &\ldots\\
}

\begin{table}[t]
  \centering
  \renewcommand{\arraystretch}{1.25}
  \begin{tabular}{c||c||c|c|c|c|c|c|cl}
    \hline
        {} & {} & \multicolumn{7}{c}{$\rs-j$} & \\
        \hline
        \TABRBF{\ptwo=1}{2(\ptwo-j)-1}{\rs=1}{\rs=2}{\rs=3}{\rs=4}{\rs=5}{\rs=6}{\rs=7}
        \TABROW{j=0}{\mathbf{1}}{\mathbf{1}}{2}{3}{4}{5}{6}{7}
        \hline
        \TABRBF{\ptwo=2}{2(\ptwo-j)-1}{\rs=1}{\rs=2}{\rs=3}{\rs=4}{\rs=5}{\rs=6}{\rs=7}
        \TABROW{j=0}{\mathbf{3}}{-}{2}{\mathbf{3}}{4}{5}{6}{7}
        \TABROW{j=1}{\mathbf{1}}{-}{\mathbf{1}}{2}{3}{4}{5}{6}
        \hline
        \TABRBF{\ptwo=3}{2(\ptwo-j)-1}{\rs=1}{\rs=2}{\rs=3}{\rs=4}{\rs=5}{\rs=6}{\rs=7}
        \TABROW{j=0}{\mathbf{5}}{-}{-}{3}{4}{\mathbf{5}}{6}{7}
        \TABROW{j=1}{\mathbf{3}}{-}{-}{2}{\mathbf{3}}{4}{5}{6}
        \TABROW{j=2}{\mathbf{1}}{-}{-}{\mathbf{1}}{2}{3}{4}{5}
        \hline
        \TABRBF{\ptwo=4}{2(\ptwo-j)-1}{\rs=1}{\rs=2}{\rs=3}{\rs=4}{\rs=5}{\rs=6}{\rs=7}
        \TABROW{j=0}{\mathbf{7}}{-}{-}{-}{4}{5}{6}{\mathbf{7}}
        \TABROW{j=1}{\mathbf{5}}{-}{-}{-}{3}{4}{\mathbf{5}}{6}
        \TABROW{j=2}{\mathbf{3}}{-}{-}{-}{2}{\mathbf{3}}{4}{5}
        \TABROW{j=3}{\mathbf{1}}{-}{-}{-}{\mathbf{1}}{2}{3}{4}
        \hline
  \end{tabular}
  \renewcommand{\arraystretch}{1}
  \caption{Polynomial orders of the edge traces of $\vsh$ and its
    normal derivatives $\partial^j_n\vsh$ for $\ptwo=1,2,3,4$ and
    $\rs=\ptwo,\ldots7$ (we recall that $\rs\geq\ptwo$).
    The first column on the left reports the value of $\ptwo$ and
    $\js=0,\ldots,\ptwo-1$; the second column reports the value of
    $2(\ptwo-\js)-1$, which is a threshold value, and the remaining
    columns the possible values of $\rs-\js$ (remember that on each
    edge $\partial^{\js}_n\vsh\in\PS{\alpha_j}$ with
    $\alpha_j=\max\{2(\ptwo-\js)-1,\rs-\js\}$).
    The values of the polynomial degree $\rs-1$ such that
    $\rs-\js=2(\ptwo-\js)-1$ (or, equivalently, that
    $\rs=2\ptwo-\js-1$) are reported in bold font.
    The polynomial traces with degree equal or higher than this bold
    value, which are above it in every column ad correspond to the
    smaller order $\js$ of derivation, can be interpolated using only
    the vertex degrees of freedom \TERM{D1}.
    To interpolate the remaining edge traces we need the additional
    information provided by \TERM{D2}.  }
  \label{tab:trace:polynomial:order}
\end{table}

\begin{figure}[t]
  \begin{center}
    \begin{tabular}{cccc}
      \includegraphics[scale=0.25]{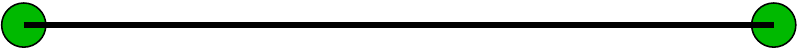} 
      & \hspace{0.5cm}
      \includegraphics[scale=0.25]{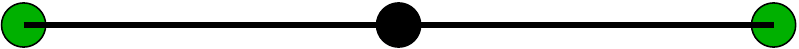} 
      & \hspace{0.5cm}
      \includegraphics[scale=0.25]{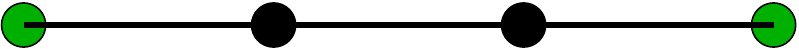} 
      & \hspace{0.5cm}
      \includegraphics[scale=0.25]{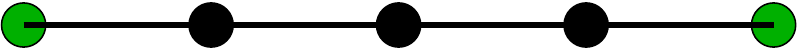} 
      \\
      $\ptwo=1,\,\rs=1$ & \hspace{0.5cm} $\ptwo=1,\,\rs=2$
      & \hspace{0.5cm} $\ptwo=1,\,\rs=3$ & \hspace{0.5cm}
      $\ptwo=1,\,\rs=4$ \\[0.75em]
      \includegraphics[scale=0.25]{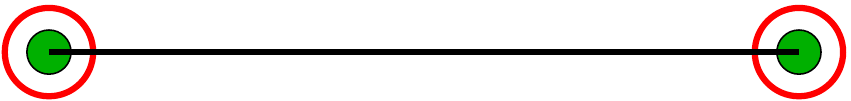} 
      & \hspace{0.5cm}
      \includegraphics[scale=0.25]{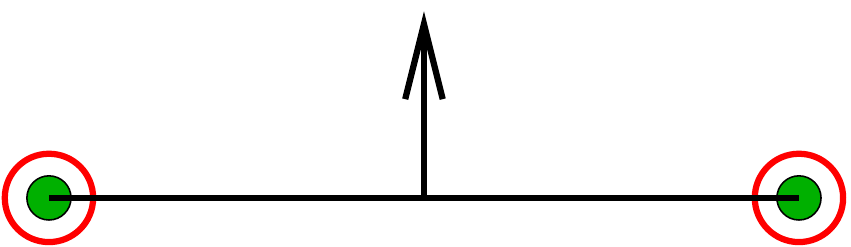} 
      & \hspace{0.5cm}
      \includegraphics[scale=0.25]{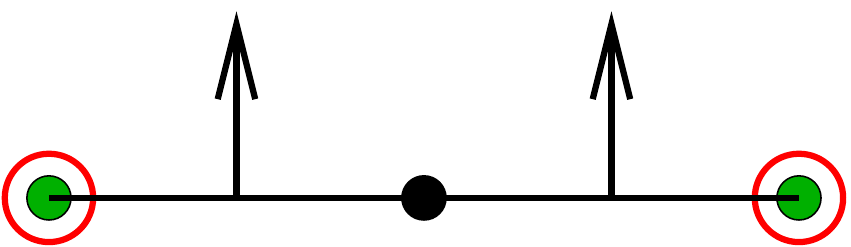} 
      & \hspace{0.5cm}
      \includegraphics[scale=0.25]{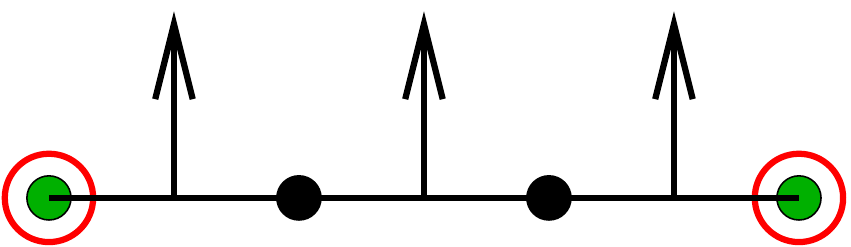} 
      \\
      $\ptwo=2,\,\rs=2$ & \hspace{0.5cm} $\ptwo=2,\,\rs=3$
      & \hspace{0.5cm} $\ptwo=2,\,\rs=4$ & \hspace{0.5cm} $\ptwo=2,\,\rs=5$ \\[0.75em]
      \includegraphics[scale=0.25]{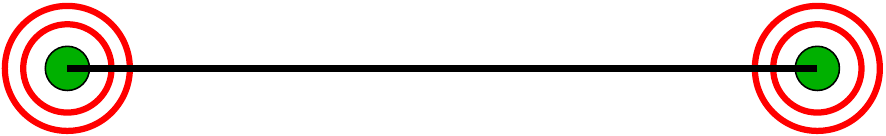} 
      & \hspace{0.5cm}
      \includegraphics[scale=0.25]{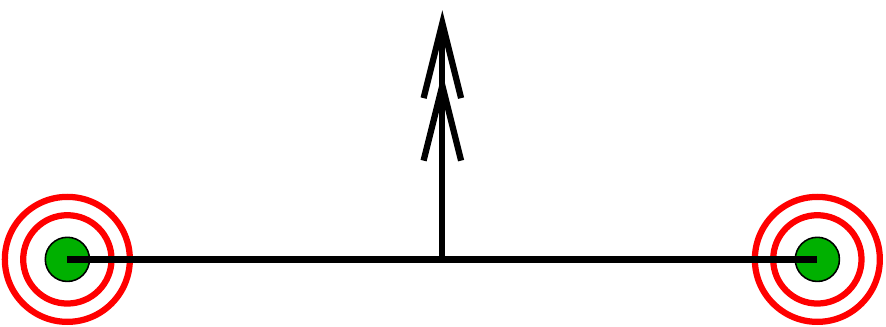} 
      & \hspace{0.5cm}
      \includegraphics[scale=0.25]{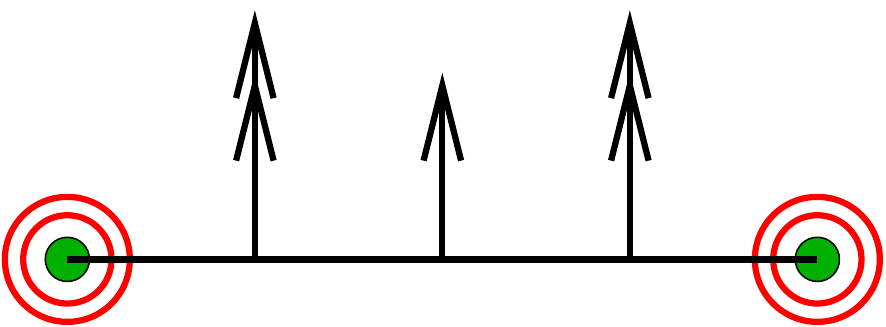} 
      & \hspace{0.5cm}
      \includegraphics[scale=0.25]{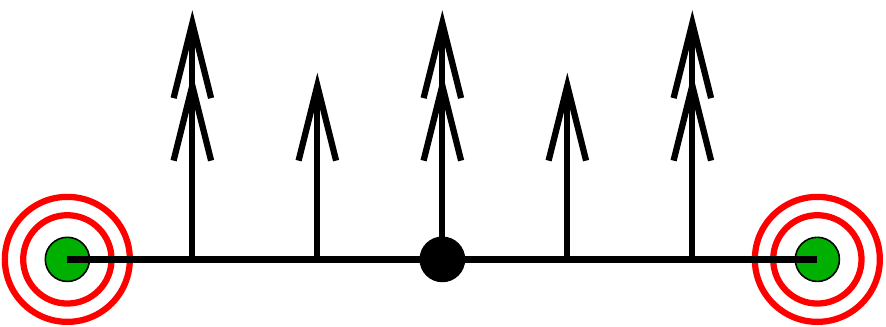} 
      \\
      $\ptwo=3,\,\rs=3$ & \hspace{0.5cm} $\ptwo=3,\,\rs=4$ & \hspace{0.5cm} $\ptwo=3,\,\rs=5$ & \hspace{0.5cm} $\ptwo=3,\,\rs=6$ \\[0.75em]
      \includegraphics[scale=0.25]{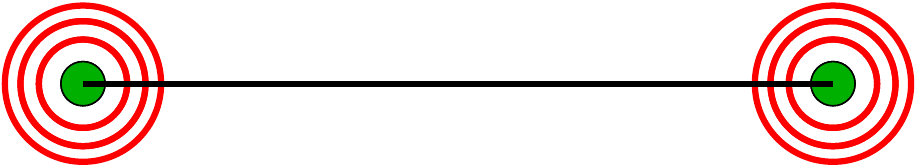} 
      & \hspace{0.5cm}
      \includegraphics[scale=0.25]{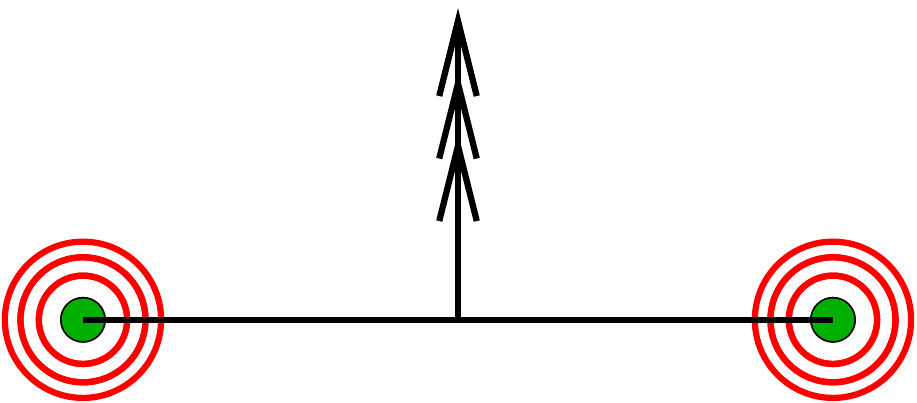} 
      & \hspace{0.5cm}
      \includegraphics[scale=0.25]{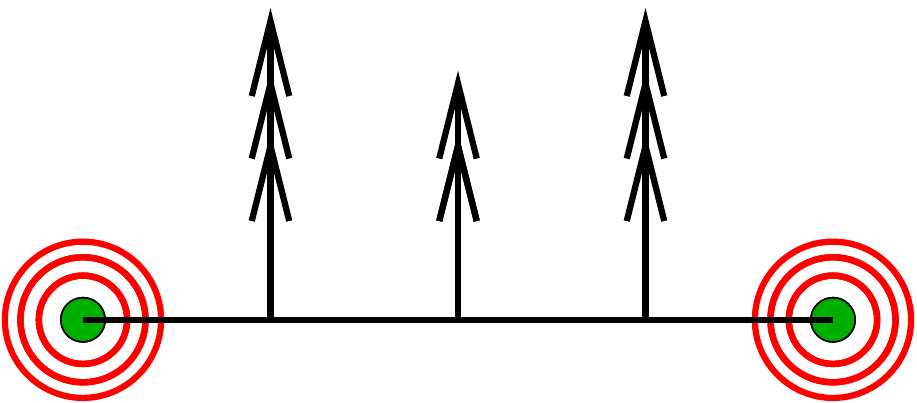} 
      & \hspace{0.5cm}
      \includegraphics[scale=0.25]{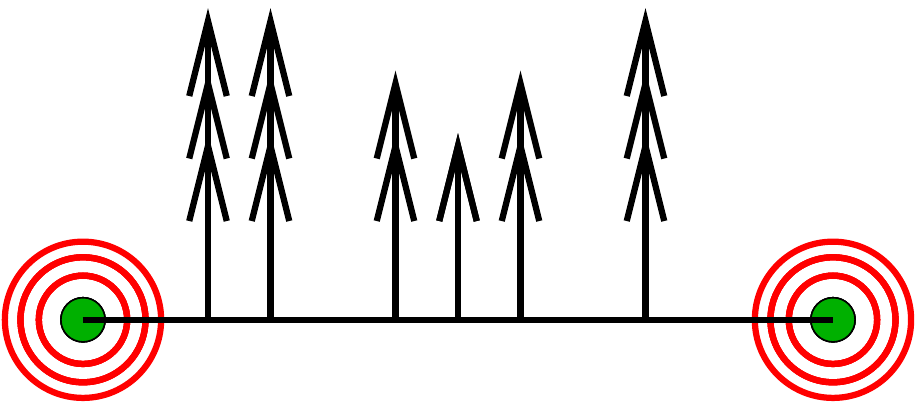} 
      \\
      $\ptwo=4,\,\rs=4$ & \hspace{0.5cm} $\ptwo=4,\,\rs=5$ & \hspace{0.5cm} $\ptwo=4,\,\rs=6$ & \hspace{0.5cm} $\ptwo=4,\,\rs=7$ \\[0.75em]
    \end{tabular}
    \caption{Edge degrees of freedom of the virtual element space
      $\Vhrp{\rs}$ with regularity index $\ptwo=1$ (Laplace operator),
      $\ptwo=2$ (bi-harmonic operator), $\ptwo=3$ (tri-harmonic
      operator), $\ptwo=4$, and polynomial degree $\rs$ such that
      $\ptwo\leq\rs\leq\ptwo+4$.
      The (green) dots at the vertices represent the vertex values and
      each (red) vertex circle represents an order of derivation.
      The (black) dots on the edge represent the polynomial moments of
      the trace $\restrict{\vsh}{\E}$; the arrows represent the
      polynomial moments of $\restrict{\partial_n\vsh}{\E}$; the
      double arrows represent the polynomial moments of
      $\restrict{\partial^2_{n}\vsh}{\E}$.  }
    \label{fig:dofs}
  \end{center}
\end{figure}

\begin{lemma}
  \label{lemma:unisolvence}
  The degrees of freedom \TERM{D1}-\TERM{D3} are unisolvent in the
  virtual element space $\VhPrp{r}$.
\end{lemma}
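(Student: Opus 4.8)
The plan is to prove unisolvence in the standard way: I would first check that the number of functionals \TERM{D1}--\TERM{D3} equals $\dim\VhPrp{\rs}$, so that it suffices to show the evaluation map $\vsh\mapsto(\text{\TERM{D1}},\text{\TERM{D2}},\text{\TERM{D3}})$ is injective, i.e. that any $\vsh\in\VhPrp{\rs}$ annihilating all of these functionals is identically zero.

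For the count, \TERM{D3} gives $\dim\PS{\rs-2\pone}(\P)$ functionals and \TERM{D1} gives $\NPV\,\ptwo(\ptwo+1)/2$ (all derivatives of order $\le\ptwo-1$ at each of the $\NPV$ vertices). On a given edge, the trace $\partial_n^{\js}\vsh$ is a polynomial of degree $\alpha_j$, hence carries $\alpha_j+1$ coefficients, of which $2(\ptwo-\js)$ are pinned by the endpoint Hermite data provided by \TERM{D1}; the remaining $\alpha_j+1-2(\ptwo-\js)\ge0$ coefficients are supplied by the moments \TERM{D2} (this number is $\beta_j+1$ when edge moments are genuinely needed and $0$ otherwise). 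Summing the edge contributions over $\js=0,\ldots,\ptwo-1$, using $\sum_{\js=0}^{\ptwo-1}2(\ptwo-\js)=\ptwo(\ptwo+1)$ and $\NPE=\NPV$, the \TERM{D1}+\TERM{D2} total equals $\NPE\sum_{\js=0}^{\ptwo-1}(\alpha_j+1)-\NPV\,\ptwo(\ptwo+1)/2$; adding \TERM{D3} reproduces the dimension formula of Remark~\ref{remark:dim:local:space} in both regimes $\ptwo\le\rs\le2\ptwo-2$ and $\rs\ge2\ptwo-1$.

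For injectivity, assume all functionals vanish on $\vsh$. The key step---and the one I expect to be the main obstacle---is to show that every edge trace vanishes, i.e. $\partial_n^{\js}\vsh|_{\E}=0$ for $\js=0,\ldots,\ptwo-1$ on each edge $\E\subset\partial\P$. Since each edge is a segment, $\tv$ and $\nv$ are constant along it, so the chain-rule expansions of the three examples above express every $\partial_t^{\ell}\partial_n^{\js}\vsh(\V)$ with $\ell+\js\le\ptwo-1$ as a fixed linear combination of the vertex quantities $D^{\nu}\vsh(\V)$, $|\nu|\le\ptwo-1$; all of these vanish by \TERM{D1}. Hence the univariate polynomial $p:=\partial_n^{\js}\vsh|_{\E}\in\PS{\alpha_j}(\E)$ has, at each endpoint, vanishing arc-length derivatives up to order $\ptwo-1-\js$, i.e. a zero of multiplicity $\ptwo-\js$, so $p=w\,\tilde p$ with $w:=(t-t_0)^{\ptwo-\js}(t_1-t)^{\ptwo-\js}$ and $\deg\tilde p\le\alpha_j-2(\ptwo-\js)=\beta_j$. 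If $\alpha_j=2(\ptwo-\js)-1$ the required double multiplicity exceeds $\deg p$, forcing $p=0$; otherwise $w>0$ on the open edge, and testing the moments \TERM{D2} with $\tilde p\in\PS{\beta_j}(\E)$ gives $\int_{\E}w\,\tilde p^{2}\dS=0$, whence $\tilde p=0$ and $p=0$. Therefore $\vsh$ and all its normal derivatives up to order $\ptwo-1$ vanish on $\partial\P$, which is exactly the trace characterization of $\vsh\in\HS{\ptwo}_0(\P)$.

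Finally I would exploit \TERM{D3} together with the defining equation. Since $\Delta^{\ptwo}\vsh\in\PS{\rs-2\ptwo}(\P)\subseteq\PS{\rs-2\pone}(\P)$, the inclusion holding because $\ptwo\ge\pone$, it is an admissible test polynomial for \TERM{D3}, so the vanishing of those moments yields $\int_{\P}(\Delta^{\ptwo}\vsh)\,\vsh\dx=0$. As $\vsh\in\HS{\ptwo}_0(\P)$, repeated integration by parts removes all boundary contributions and identifies this integral, up to sign, with the order-$\ptwo$ polyharmonic energy seminorm of $\vsh$ (namely $\int_{\P}|\Delta^{\ell}\vsh|^{2}\dx$ for $\ptwo=2\ell$, or $\int_{\P}|\nabla\Delta^{\ell}\vsh|^{2}\dx$ for $\ptwo=2\ell+1$). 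This seminorm is a norm on $\HS{\ptwo}_0(\P)$ by the higher-order Poincar\'e--Friedrichs inequality---the elementwise analogue of the coercivity of $\as_{\pone}$ on $\HS{\pone}_0(\Omega)$ recalled earlier---so $\vsh=0$. The energy step is routine once the edge-trace identities are in place; the genuine work lies in the Hermite-plus-moment interpolation on the edges and in handling the two regimes for $\alpha_j$ uniformly.
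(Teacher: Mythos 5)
Your proof is correct and follows essentially the same route as the paper: a dimension count matching Remark~\ref{remark:dim:local:space}, followed by showing that vanishing degrees of freedom force $\vsh=0$ through the polyharmonic integration-by-parts identity and the fact that the resulting seminorm is a norm on $\HS{\ptwo}_{0}(\P)$ --- indeed, you additionally supply the Hermite-multiplicity/weighted-moment argument ($\int_{\E} w\,\tilde{p}^{2}\dS=0$) for the edge-trace vanishing, a step the paper merely asserts. One minor slip: the space definition~\eqref{eq:vem-space-lower} posits $\Delta^{\ptwo}\vsh\in\PS{\rs-2\pone}(\P)$ directly, which is all you need for admissibility in \TERM{D3}; your stronger intermediate claim $\Delta^{\ptwo}\vsh\in\PS{\rs-2\ptwo}(\P)$ is not implied by the definition, though nothing in your argument actually depends on it.
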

\begin{proof}
  Let $\P$ be a polygonal element.
  First, a counting argument shows that the number of degrees of
  freedom \TERM{D1}-\TERM{D3} is equal to the dimension of
  $\VhPrp{\rs}$ (see Remark~\ref{remark:dim:local:space}).
  Then, we prove that a virtual element function $\vsh$ is necessarily
  zero if all its degrees of freedom \TERM{D1}-\TERM{D3} are zero.
  In particular, assuming that the degrees of freedom \TERM{D1} and
  \TERM{D2} are zero implies that the polynomial traces of $\vsh$ and
  its normal derivatives of order up to $\ptwo-1$ are identically zero
  on all edges of $\partial\P$, and so are their tangential
  derivatives of any order.
  Likewise, assuming that the degrees of freedom \TERM{D3} are zero
  implies that the elemental moments of $\vsh$ against the polynomials
  of degree up to $\rs-2\pone$ (for $\rs\geq2\pone$) are zero.
  
  Consider separately the case of odd and even values of $\ptwo$.
  If $\ptwo=2\ell+1$ with $\ell\geq0$, a repeated application of the
  integration by parts formula yields
  \begin{align}
    \int_{\P}\ABS{\nabla\Delta^{\ell}\vsh}^2\,\dx =
    &
    - \int_{\P} \big(\Delta^{\ptwo}\vsh\big)\,\vsh\,\dx
    + \int_{\partial\P}\big(\partial_n\Delta^\ell\vsh\big)\,\Delta^\ell\vsh\dS\nonumber \\[0.5em]
    & +\sum_{i=1}^\ell
    \left(
    \int_{\partial\P}\big(\partial_n\Delta^{\ptwo-i}\vsh\big)\,\Delta^{i-1}\vsh\dS -
    \int_{\partial\P}\big(\Delta^{\ptwo-i}\vsh\big)\,\partial_n\Delta^{i-1}\vsh\dS
    \right).
    \label{eq:poly:intbyparts:odd:p}
  \end{align}
  Similarly, if $\ptwo=2\ell$ with $\ell\geq1$, we find that 
  \begin{align}
    \int_{\P}\ABS{\Delta^{\ell}\vsh}^2\,\dx
    &= \int_{\P}\big(\Delta^{\ptwo}\vsh\big)\,\vsh\,\dx
    \nonumber\\[0.5em]
    &\phantom{=}
    -\sum_{i=1}^\ell
    \left(
    \int_{\partial\P}  \big(\partial_n\Delta^{\ptwo-i}\vsh\big)\,\Delta^{i-1}\vsh\,\dS
    -\int_{\partial\P} \big(\Delta^{\ptwo-i}\vsh\big)\,\partial_n\Delta^{i-1}\vsh\,\dS
    \right).
    \label{eq:poly:intbyparts:even:p}
  \end{align}
  
  Since $\Delta^{\ptwo}\vsh$ is a polynomial of degree $\rs-2\pone$
  according to the definition of the virtual element space, the volume
  integral in the right-hand sides of~\eqref{eq:poly:intbyparts:odd:p}
  and~\eqref{eq:poly:intbyparts:even:p} is an elemental moment of
  $\vsh$.
  This integral must be zero since we assumed that the degrees of
  freedom \TERM{D3} of $\vsh$ are zero.

  To prove that the edge integrals in~\eqref{eq:poly:intbyparts:odd:p}
  and~\eqref{eq:poly:intbyparts:even:p} are zero, we first note that
  such integrals contain the edge trace of $\Delta^{\mu}\vsh$ for
  $\mu=0,\ldots,\ptwo-1$ and its normal and tangential derivatives.
  Since $\restrict{\Delta\vsh}{\E}=\partial^2_t\vsh+\partial^2_n\vsh$,
  it holds that 
  \begin{align}
    \Delta^{\mu}\vsh
    = \big(\partial^2_t+\partial^2_n\big)^\mu\vsh
    = \sum_{\nu=0}^{\mu}\Cs_{\mu,\nu}\,\partial^{2(\mu-\nu)}_t\,\partial^{2\nu}_n\vsh,
    \label{eq:proof:unisolvence:00}
  \end{align}
  where $\Cs_{\mu,\nu}$ denote the $\nu$-th combinatorial coefficient
  of the $\mu$-th power expansion.
  Therefore, all the edge integrals either contain the normal
  derivatives $\partial^{\ell}_n\vsh$ for some integer
  $\ell=0,\ldots,\ptwo-1$, or the tangential derivatives of these
  quantities.
  As noted at the beginning of this proof, all these quantities are
  zero since we assumed that the degrees of freedom
  \TERM{D1}-\TERM{D2} of $\vsh$ are zero.

  Finally, we note that a function $\vsh\in\VhPrp{\rs}$ with all zero
  degrees of freedom also belongs to
  $\HS{\ptwo}_{0}(\P)=\{\vs\in\HS{\ptwo}(\P):\,\restrict{\partial^j\vs}{\partial\P}=0\,\forall\js=0,\ldots,\ptwo-1\}$.
  Since both left-hand sides of~\eqref{eq:poly:intbyparts:odd:p}
  and~\eqref{eq:poly:intbyparts:even:p} are a norm on
  $\HS{\ptwo}_{0}(\P)$, it follows that $\vsh=0$.
  \ENDPROOF
\end{proof}

\subsection{Global virtual element spaces}

Building upon the local virtual element spaces, the \emph{global}
conforming virtual element space $\Vhrp{r}$ is defined on $\Omega$ as
\begin{align}
  \Vhrp{\rs} = \Big\{
  \vsh\in\HS{\pone}_{0}(\Omega)\cap\HS{\ptwo}(\Omega)\,:\,\restrict{\vsh}{\P}\in\VhPrp{\rs}\,\,\forall\P\in\Th
  \Big\},
  \label{eq:poly:global:space}
\end{align}
where $\VhPrp{\rs}$ is the local space defined
in~\eqref{eq:vem-space-lower} if $\ptwo\leq\rs\leq2\ptwo-2$ and the
local space defined in~\eqref{eq:vem-space-higher} if
$\rs\geq2\ptwo-1$.

\begin{remark}
  \label{remark:dim:global:space}
  Let $\NP$, $\NE$ and $\NV$ denote the
  number of element, edges and vertices of $\Th$.
  The dimension of the global virtual element space built
  upon~\eqref{eq:vem-space-lower} is given by
  \begin{align*}
    \textrm{dim}\,\Vhrp{\rs}
    &
    = \NP\frac{(\rs-2\pone+1)(\rs-2\pone+2)}{2}
    + \NE\sum_{j=0}^{\ptwo-1}\big(\alpha_j(\ptwo,\rs)+1\big)
    \\[0.4em]
    &\phantom{=}
    - \NV\frac{(\ptwo+1)\ptwo}{2}.
  \end{align*}
  The dimension of the global virtual element space built
  upon~\eqref{eq:vem-space-higher} is given by
  \begin{align*}
    \textrm{dim}\,\Vhrp{\rs}
    &
    = \NP\frac{(\rs-2\pone+1)(\rs-2\pone+2)}{2}
    + \NE\frac{(\ptwo-1)\big(2(\rs+1)-(\ptwo-1)\big)}{2}
    \\[0.4em]
    &\phantom{=}
    - \NV\frac{(\ptwo+1)\ptwo}{2}.
  \end{align*}
\end{remark}

The set of global degrees of freedom are inherited from the local
degrees of freedom of section~\ref{subsec:degrees-of-freedom}.
Therefore, we consider
\begin{description}
\item\TERM{D1} $\hV^{|\nu|}D^{\nu}\vsh(\V)$, $\ABS{\nu}\leq \ptwo-1$
  for every vertex $\V$ of $\calV_{\hh}$;
  
  \medskip
\item\TERM{D2}
  $\displaystyle\hE^{-1+j}\int_{\E}\qs\partial_{n}^j\vsh\dS$ for any
  $\qs\in\PS{\beta_j}(\E)$ and every edge $\E$ of $\Eh$,
  $\js=\ks+1,\ldots,\ptwo-1$;

  \medskip
\item\TERM{D3} $\displaystyle\hP^{-2}\int_{\P}\qs\vsh\dx$ for any
  $\qs\in\PS{r-2\pone}(\P)$ and every element $\P$ of $\Th$,
\end{description}
where, again, $\beta_j=\alpha_j-\min\{2(\ptwo-j)-1,\rs-j\}-1$ and
$\alpha_j(\ptwo,\rs)=\max\{2(\ptwo-\js)-1,\rs-\js\}$, $\js=0,\ldots,\ptwo-1$.
For $\rs\geq2\ptwo-1$, these degrees of freedom become

\medskip
\begin{description}
\item\TERM{D1} $\hV^{|\nu|}\Ds^{\nu}\vsh(\V)$, $\ABS{\nu}\leq \ptwo-1$ for every
  interior vertex $\V$ of $\calV_{\hh}$;

  \medskip
\item\TERM{D2} $\displaystyle\hE^{-1+j}\int_{\E}\qs\partial^j_n\vsh\,ds$ for
  any $\qs\in\PS{r-2\ptwo+j}(e)$ $j=0,\ldots,\ptwo-1$ and every interior
  edge $\E$ of $\Eh$;

  \medskip
\item\TERM{D3} $\displaystyle\hP^{-2}\int_{\P}\qs\vsh\dx$ for any
  $\qs\in\PS{r-2\pone}(\P)$ and every element $\P$ of $\Th$.
\end{description}
We remark that the associated global space is made of
$\HS{\ptwo}(\Omega)$ functions.
Indeed, the restriction of a virtual element function $\vsh$ to each
element $\P$ belongs to $\HS{\ptwo}(\P)$ and glues with
$C^{\ptwo-1}$-regularity across the internal mesh faces.

Finally, the unisolvence of these degrees of freedom is an immediate
consequence of the unisolvence of the elementwise degrees of freedom
in any elemental space $\VhPrp{\rs}$,
cf. Lemma~\ref{lemma:unisolvence}.

\subsection{Elliptic projection operator}

The elliptic projection operator
$\PiPr{\rs}:\VhPrp{\rs}\to\PS{\rs}(\P)$ is such that for all
$\vsh\in\VhPrp{\rs}$, the projection $\PiPr{r}\vsh$ is the solution of
the finite dimensional variational problem
\begin{align}
  \asP(\PiPr{r}\vsh-\vsh,\qs)                     &=0 \quad\forall\qs\in\PS{\rs}(\P),\label{eq:poly:Pi:A}\\[0.5em]
  \int_{\partial\P}\big(\PiPr{r}\vsh-\vsh\big)\qs\dS &=0 \quad\forall\qs\in\PS{\pone-1}(\P).\label{eq:poly:Pi:B}  
\end{align}
Condition~\eqref{eq:poly:Pi:B} allows us to fix the nontrivial kernel
of $\asP(\cdot,\cdot)$, which is the subspace of polynomials of degree
(up to) $\pone-1$.

\medskip
\begin{remark}
  Instead of~\eqref{eq:poly:Pi:B}, we can consider the alternative
  condition~\cite{Antonietti-Manzini-Verani:2019}
  \begin{align*}
    \widehat{\Pi}^{\P}\Ds^{\nu}\PiPr{r}\vsh
    &= \widehat{\Pi}^{\P}\Ds^{\nu}\vsh\quad\textrm{with~}\ABS{\nu}\leq \pone-1,
  \end{align*}
  by using the \emph{vertex average projection}
  $\widehat{\Pi}^{\P}:\CS{}(\P)\to\PS{0}(\P)$, which is such that
  \begin{align}
    \widehat{\Pi}^{\P}\psi = \frac{1}{\NP}\sum_{\vrtx\in\partial\P}\psi({\vrtx}),
  \label{eq:trih:vertex:average:projection}
  \end{align}
  for all continuous function $\psi$.
\end{remark}

\begin{lemma}
  \label{lemma:elliptic:projection}
  The elliptic projection operator $\PiPr{\rs}$ is polynomial
  preserving in the sense that $\PiPr{\rs}\qs=\qs$ for every
  $\qs\in\PS{\rs}(\P)$.
\end{lemma}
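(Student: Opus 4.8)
The plan is to reduce the statement to a uniqueness argument for the defining relations~\eqref{eq:poly:Pi:A}--\eqref{eq:poly:Pi:B}. Fix $\qs\in\PS{\rs}(\P)$ and set $p:=\PiPr{\rs}\qs-\qs$. Since $\PiPr{\rs}\qs\in\PS{\rs}(\P)$ by the definition of the operator and $\qs\in\PS{\rs}(\P)$ by hypothesis, we have $p\in\PS{\rs}(\P)$; hence $p$ is itself an admissible test function in~\eqref{eq:poly:Pi:A}, and (once it is shown to lie in the kernel of $\asP(\cdot,\cdot)$) also in~\eqref{eq:poly:Pi:B}. The whole proof then amounts to showing $p=0$. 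Equivalently, one observes that $\qs$ trivially satisfies both defining relations with $\PiPr{\rs}\qs$ replaced by $\qs$, so $\qs$ is a solution of the projection problem and the claim follows from the well-posedness of that problem.

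First I would use~\eqref{eq:poly:Pi:A} with data $\qs$ and test polynomial $p\in\PS{\rs}(\P)$, which gives $\asP(p,p)=\asP(\PiPr{\rs}\qs-\qs,p)=0$. Because $\asP(\cdot,\cdot)$ is, by~\eqref{eq:a:def}, a positive semidefinite form whose kernel is the polynomial space $\PS{\pone-1}(\P)$ (the kernel fixed by~\eqref{eq:poly:Pi:B}, as recalled right after its definition), this forces $p\in\PS{\pone-1}(\P)$. Next I would invoke~\eqref{eq:poly:Pi:B}, again with data $\qs$: since now $p\in\PS{\pone-1}(\P)$ is an admissible choice, we get $\int_{\partial\P}p^{2}\dS=\int_{\partial\P}\big(\PiPr{\rs}\qs-\qs\big)\,p\dS=0$, and continuity of $p$ yields $p\equiv 0$ on $\partial\P$. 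A polynomial of degree at most $\pone-1$ vanishing on the whole boundary $\partial\P$ is identically zero, so $p=0$ and therefore $\PiPr{\rs}\qs=\qs$.

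The main obstacle is precisely this last implication, that is, the nondegeneracy of the boundary pairing $\int_{\partial\P}(\cdot)(\cdot)\dS$ on the kernel $\PS{\pone-1}(\P)$; this is exactly the property guaranteeing that~\eqref{eq:poly:Pi:B} fixes the kernel and that the projection is well posed. I would establish it by restricting $p$ to each edge: on the line supporting an edge $\E$, the trace of $p$ is a univariate polynomial vanishing on an entire segment, hence vanishing on the whole line, so the associated linear factor divides $p$; running over the edges of $\P$ and comparing with $\deg p\le\pone-1$ forces $p=0$. Should this degree count be delicate, the clean alternative is to replace~\eqref{eq:poly:Pi:B} by the vertex-average conditions of the preceding Remark, for which the identical energy argument applies with the boundary pairing replaced by the projection $\widehat{\Pi}^{\P}$.
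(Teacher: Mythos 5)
Your proof is correct and follows the same two-step skeleton as the paper's proof --- first use \eqref{eq:poly:Pi:A} to force the error $p=\PiPr{\rs}\qs-\qs$ into $\PS{\pone-1}(\P)$, then use \eqref{eq:poly:Pi:B} to kill it --- but the execution is genuinely different. The paper writes $\PS{\rs}(\P)=\PS{\rs}(\P)\setminus\PS{\pone-1}(\P)\oplus\PS{\pone-1}(\P)$, posits a basis $\{\mu_{\ell'}\}$ of the complement satisfying $\asP(\mu_{\ell'},\mu_{\ell})=\mP\delta_{\ell',\ell}$ and a basis $\{\muh_{\ell'}\}$ of $\PS{\pone-1}(\P)$ orthonormal for the boundary pairing, and matches expansion coefficients test function by test function. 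You avoid constructing any basis: testing \eqref{eq:poly:Pi:A} with $p$ itself gives $\asP(p,p)=0$, and positive semidefiniteness (Cauchy--Schwarz) places $p$ in the kernel; testing \eqref{eq:poly:Pi:B} with $p$ gives $\int_{\partial\P}p^2\dS=0$. This is more elementary, and it has the merit of surfacing exactly the two nondegeneracy facts that the paper hides inside its orthonormality assumptions: (i) that the kernel of $\asP(\cdot,\cdot)$ on $\PS{\rs}(\P)$ is \emph{exactly} $\PS{\pone-1}(\P)$ --- the paper needs this for the normalization $\asP(\mu_{\ell'},\mu_{\ell})=\mP\delta_{\ell',\ell}$ to be possible at all, you need it for the implication $\asP(p,p)=0\Rightarrow p\in\PS{\pone-1}(\P)$, and both proofs take it from the paper's assertion following \eqref{eq:poly:Pi:B} (an assertion that is actually delicate: for $\pone=2$ the form \eqref{eq:a:def} annihilates every harmonic polynomial, not just $\PS{1}(\P)$); and (ii) that the boundary pairing is definite on $\PS{\pone-1}(\P)$, which the paper simply encodes in $(\muh_{\ell'},\muh_{\ell})_{\partial\P}=\ABS{\partial\P}\,\delta_{\ell',\ell}$, whereas you actually prove it by the linear-factor/degree argument --- valid precisely when $\pone-1$ is smaller than the number of edges of $\P$ (so always for $\pone\le 3$), a restriction you correctly flag along with the vertex-average remedy from the Remark. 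In short: same strategy, but your version is basis-free, more self-contained, and more explicit about the hypotheses either proof needs to close.
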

\begin{proof}
  Let $\PS{\rs}(\P)\setminus\PS{\pone-1}(\P)$ denote the linear space
  of polynomials of degrees $\ss$ such that $\pone\leq\ss\leq\rs$, and
  consider the decomposition
  \begin{align}
    \PS{\rs}(\P)=\PS{\rs}(\P)\setminus\PS{\pone-1}(\P)\oplus\PS{\pone-1}(\P).
  \end{align}
  We expand the polynomial $\qs\in\PS{\rs}(\P)$ and its projection
  $\PiPr{\rs}\qs$ as follows
  \begin{align}
    \qs           &= \sum_{\ell'}\cs_{\ell'}(\qs)\mu_{\ell'}           + \sum_{\ell'}\cshat_{\ell'}(\qs)\muh_{\ell'},\label{eq:qs:expansion}\\[0.5em]
    \PiPr{\rs}\qs &= \sum_{\ell'}\cs_{\ell'}(\PiPr{\rs}\qs)\mu_{\ell'} + \sum_{\ell'}\cshat(\PiPr{\rs}\qs)\muh_{\ell'},\label{eq:PiPr:qs:expansion}
  \end{align}
  where $\{\mu_{\ell'}\}$ is a basis of
  $\PS{\rs}(\P)\setminus\PS{\pone-1}(\P)$, $\{\muh_{\ell'}\}$ is a
  basis of $\PS{\pone-1}(\P)$, and $\cs_{\ell'}(\qs)$,
  $\cshat_{\ell'}(\qs)$, $\cs_{\ell'}(\PiPr{\rs}\qs)$, and
  $\cshat(\PiPr{\rs}\qs)$ are the coefficients of such expansions.
  The range of the summation index $\ell'$, which is not expicitly
  indicated in~\eqref{eq:qs:expansion}
  and~\eqref{eq:PiPr:qs:expansion}, is consistent with the dimensions
  of $\PS{\rs}(\P)\setminus\PS{\pone-1}(\P)$ and $\PS{\pone-1}(\P)$.
  We assume that the polynomials $\mu_{\ell'}$ are orthogonal with
  respect to the semi-inner product $\asP(\cdot,\cdot)$, which is the
  restriction of $\asP(\cdot,\cdot)$ to a polygonal element $\P$, so
  that $\asP(\mu_{\ell'},\mu_{\ell})=\mP\delta_{\ell',\ell}$.
  Since the polynomials $\muh_{\ell'}$ belong to the kernel of
  $\asP(\cdot,\cdot)$, we substitute the
  expansions~\eqref{eq:qs:expansion} and~\eqref{eq:PiPr:qs:expansion}
  in~\eqref{eq:poly:Pi:A} (with $\vsh=\qs$ and $\qs=\mu_{\ell}$) and
  we find that
  \begin{align*}
    0
    &= \asP(\PiPr{\rs}\qs-\qs,\mu_{\ell})
    = \sum_{\ell'}\big(\cs_{\ell'}(\PiPr{\rs}\qs)-\cs_{\ell'}(\qs)\big)\asP(\mu_{\ell},\mu_{\ell'})\\[0.5em]
    &= \mP\sum_{\ell'}\big(\cs_{\ell'}(\PiPr{\rs}\qs)-\cs_{\ell'}(\qs)\big)\delta_{\ell,\ell'}
    = \mP\big(\cs_{\ell}(\PiPr{\rs}\qs)-\cs_{\ell}(\qs)\big),
  \end{align*}
  which holds for all possible integers $\ell$.
  This relation implies that
  \begin{align}
    \PiPr{\rs}\qs-\qs
    = \sum_{\ell'}\big(\cshat_{\ell'}(\PiPr{\rs}\qs)-\cshat_{\ell'}(\qs)\big)\muh_{\ell'}\in\PS{\pone-1}(\P).
    \label{eq:PiPr:diff}
  \end{align}
  Then, we assume that the polynomials $\muh_{\ell'}$ are orthogonal
  with respect to the inner product
  $(\vs,\us)_{\partial\P}=\int_{\partial\P}\vs\us\dS$, so that
  $(\muh_{\ell'},\muh_{\ell})_{\partial\P}=\ABS{\partial\P}\delta_{\ell',\ell}$,
  where $\ABS{\partial\P}$ is the perymeter of $\partial\P$.
  We substitute~\eqref{eq:PiPr:diff} in~\eqref{eq:poly:Pi:B} (with
  $\vsh=\qs$ and $\qs=\muh_{\ell}$) and we find that
  \begin{align*}
    0
    &= \int_{\partial\P}\big(\PiPr{r}\qs-\qs\big)\muh_{\ell}\dS
    =  \sum_{\ell'}\big(\cshat_{\ell'}(\PiPr{\rs}\qs)-\cshat_{\ell'}(\qs)\big)\int_{\partial\P}\muh_{\ell'}\muh_{\ell}\dS\\[0.5em]
    &= \ABS{\partial\P}\sum_{\ell'}\big(\cshat_{\ell'}(\PiPr{\rs}\qs)-\cshat_{\ell'}(\qs)\big)\delta_{\ell',\ell}
    =  \ABS{\partial\P}\big(\cshat_{\ell}(\PiPr{\rs}\qs)-\cshat_{\ell}(\qs)\big).
  \end{align*}
  which holds for all possible integers $\ell$.
  This implies that $\PiPr{\rs}\qs-\qs=0$, which is the assertion of
  the lemma.
  \ENDPROOF
\end{proof}

\begin{lemma}
  The polynomial projection $\PiPr{\rs}\vsh$ is
  \emph{computable} using only the degrees of freedom
  \TERM{D1}-\TERM{D3} of $\vsh\in\VhPrp{\rs}$.
\end{lemma}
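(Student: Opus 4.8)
The plan is to show that both right-hand sides appearing in the defining conditions~\eqref{eq:poly:Pi:A}--\eqref{eq:poly:Pi:B}, namely $\asP(\vsh,\qs)$ for $\qs\in\PS{\rs}(\P)$ and $\int_{\partial\P}\vsh\,\qs\dS$ for $\qs\in\PS{\pone-1}(\P)$, are evaluable from the degrees of freedom \TERM{D1}--\TERM{D3} alone. Once this is established, expanding $\PiPr{\rs}\vsh$ on a basis of $\PS{\rs}(\P)$ turns~\eqref{eq:poly:Pi:A}--\eqref{eq:poly:Pi:B} into a square linear system with computable data, whose solution furnishes the coefficients of the projection.

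First I would treat the bilinear term $\asP(\vsh,\qs)$. Using Green's identities repeatedly to transfer \emph{all} derivatives off the virtual function $\vsh$ and onto the polynomial $\qs$ (peeling one Laplacian at a time, as in the derivation of~\eqref{eq:poly:intbyparts:odd:p}--\eqref{eq:poly:intbyparts:even:p} but with the roles of the two arguments interchanged), I obtain a single volume integral $\pm\int_{\P}\vsh\,\Delta^{\pone}\qs\dx$ together with a collection of boundary integrals on $\partial\P$. Since $\qs\in\PS{\rs}(\P)$ gives $\Delta^{\pone}\qs\in\PS{\rs-2\pone}(\P)$, the volume term is a moment of $\vsh$ against a polynomial of degree $\rs-2\pone$, hence computable from \TERM{D3} (cf. Remark~\ref{rem:ortho:proj:r-2p1}). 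A bookkeeping of the peeling shows that each boundary integral pairs a \emph{known} derivative of $\qs$ with the edge trace of $\Delta^{\mu}\vsh$ or $\partial_n\Delta^{\mu}\vsh$ for $\mu\le\ell$, so that only derivatives of $\vsh$ of order at most $\pone-1$ ever appear. By~\eqref{eq:proof:unisolvence:00} these traces are linear combinations of $\partial^{j}_{n}\vsh$ with $j\le\pone-1\le\ptwo-1$ and of their tangential derivatives, each of which is a univariate polynomial on every edge recovered from \TERM{D1}--\TERM{D2} through the interpolation process of the preceding examples. Hence every boundary contribution is computable.

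Next I would treat the second condition, whose right-hand side $\int_{\partial\P}\vsh\,\qs\dS$ is an edge integral of the trace $\restrict{\vsh}{\partial\P}$. This is the case $j=0$ of the preceding discussion: the trace is a piecewise univariate polynomial on $\partial\P$ determined by \TERM{D1}--\TERM{D2}, so its integral against any $\qs\in\PS{\pone-1}(\P)$ is computable. Finally I would assemble the system: writing $\PiPr{\rs}\vsh$ in a basis adapted to the splitting $\PS{\rs}(\P)=\big(\PS{\rs}(\P)\setminus\PS{\pone-1}(\P)\big)\oplus\PS{\pone-1}(\P)$ used in Lemma~\ref{lemma:elliptic:projection}, condition~\eqref{eq:poly:Pi:A} fixes the coefficients in the first summand and~\eqref{eq:poly:Pi:B} those in the second, all against computable right-hand sides, so that the solution is obtained by elementary linear algebra.

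The main obstacle is precisely the bookkeeping of the boundary terms in the polyharmonic Green's identity: one must verify, for both parities of $\pone$, that loading all derivatives onto $\qs$ leaves only derivatives of $\vsh$ of order at most $\pone-1$ on $\partial\P$ (equivalently, only the quantities $\partial^{j}_{n}\vsh$ with $j\le\ptwo-1$ together with their tangential derivatives), so that nothing beyond the reach of \TERM{D1}--\TERM{D2} is ever required. The volume term and the second condition are comparatively immediate.
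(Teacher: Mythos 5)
Your proposal is correct and, in outline, it is the same strategy as the paper's own proof: reduce the computability of $\PiPr{\rs}\vsh$ to the computability of $\asP(\vsh,\qs)$ and of $\int_{\partial\P}\vsh\,\qs\dS$, obtain the volume contribution from the moments \TERM{D3}, and obtain the edge contributions from the traces of $\partial^{j}_{n}\vsh$, $j\le\ptwo-1$, reconstructed from \TERM{D1}--\TERM{D2}. The one substantive difference is the direction of the integration by parts, and your choice is in fact the one that makes the argument work. You transfer all derivatives onto the polynomial $\qs$, so the volume term becomes $\pm\int_{\P}\vsh\,\Delta^{\pone}\qs\dx$ with $\Delta^{\pone}\qs\in\PS{\rs-2\pone}(\P)$ --- a genuine \TERM{D3} moment --- and every boundary term pairs a known derivative of $\qs$ with a derivative of $\vsh$ of total order at most $\pone-1\le\ptwo-1$, hence recoverable from \TERM{D1}--\TERM{D2} via the expansion \eqref{eq:proof:unisolvence:00}. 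The paper's displayed identities \eqref{eq:poly:intbyparts:odd:p1}--\eqref{eq:poly:intbyparts:even:p1} load the Laplacians the other way, onto $\vsh$: there the volume term reads $\int_{\P}\big(\Delta^{\pone}\vsh\big)\qs\dx$, which is not a \TERM{D3} moment (indeed $\Delta^{\pone}\vsh$ need not even be a polynomial when $\ptwo>\pone$, and even when it is, it is not known from the degrees of freedom), and the edge terms contain $\partial_n\Delta^{\pone-i}\vsh$, i.e.\ derivatives of $\vsh$ of order up to $2\pone-1$, which exceed what \TERM{D1}--\TERM{D2} can provide. The computability claims the paper then asserts (volume term from \TERM{D3}, edge traces of $\vsh$ of order $\le\ptwo-1$ from \TERM{D1}--\TERM{D2}) are exactly the ones that hold for your orientation of the identity, so the printed formulas evidently have the roles of $\vsh$ and $\qs$ transposed; your write-up is the corrected form of the paper's own argument, and your explicit parity-by-parity bookkeeping closes precisely the point the paper leaves implicit. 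Your final assembly step --- solving the square linear system given by \eqref{eq:poly:Pi:A}--\eqref{eq:poly:Pi:B} in a basis adapted to the splitting $\PS{\rs}(\P)=\big(\PS{\rs}(\P)\setminus\PS{\pone-1}(\P)\big)\oplus\PS{\pone-1}(\P)$ --- is also sound, with unique solvability guaranteed by the argument of Lemma~\ref{lemma:elliptic:projection}.
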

\begin{proof}
  To prove the assertion of the lemma, we only need to prove that
  $\asP(\vsh,\qs)$
  and $(\vs,\us)_{\partial\P}=\int_{\partial\P}\vsh\qs\dS$
  are computable for all $\vsh\in\VhPrp{\rs}$ and scalar polynomial
  $\qs\in\PS{\rs}(\P)$.
  To this end, we integrate by parts $\asP(\vsh,\qs)$.
  For an odd $\pone$, i.e., $\pone=2\ell+1$, we find that
  \begin{align} 
    \asP(\vsh,\qs) 
    =
    & -\int_{\P} \big(\Delta^{\pone}\vsh)\,\qs\,\dx
    + \int_{\partial\P}\big(\partial_n\Delta^\ell\vsh\big)\,\Delta^\ell\qs\dS\nonumber \\[0.5em]
    &  +\sum_{i=1}^\ell
    \left( 
    \int_{\partial\P}\big(\partial_n\Delta^{\pone-i}\vsh\big)\,\Delta^{i-1}\qs\dS
    -\int_{\partial\P}\big(\Delta^{\pone-i}\vsh\big)\,\partial_n\Delta^{i-1}\qs\dS
    \right).
    \label{eq:poly:intbyparts:odd:p1}
  \end{align}
  For an even $\pone$, i.e., $\pone=2\ell$, we find that
  \begin{align} 
    \asP(\vsh,\qs) 
    &= \int_{\P}\big(\Delta^{\pone}\vsh\big)\,\qs\,\dx
    \nonumber\\[0.5em]
    &\phantom{=}
    -\sum_{i=1}^\ell
    \left(
    \int_{\partial\P}  \big(\partial_n\Delta^{\pone-i}\vsh\big)\,\Delta^{i-1}\qs\,\dS
    -\int_{\partial\P} \big(\Delta^{\pone-i}\vsh\big)\,\partial_n\Delta^{i-1}\qs\,\dS
    \right).
    \label{eq:poly:intbyparts:even:p1}
  \end{align}
  The first integral of the right-hand side of both formulas
  \eqref{eq:poly:intbyparts:odd:p1}
  and~\eqref{eq:poly:intbyparts:even:p1} is computable from the
  degrees of freedom \TERM{D3}.
  In turn, all the edge integrals are computable since we can expand
  the trace of $\Delta^{\mu}\vsh$ in terms of
  $\partial^{2(\mu-\nu)}_{t}\partial^{2\mu}_n\vsh$ and use the same
  argument of the proof of Lemma~\ref{lemma:unisolvence}.
  Since the edge traces of $\vsh$ and its normal derivatives (and all
  their tangential derivatives) are computable from the degrees of
  freedom of \TERM{D1}-\TERM{D2} through a polynomial interpolation,
  we deduce that all the edge integrals for both odd and even $\pone$
  and the boundary integral $(\vs,\us)_{\partial\P}$
  are computable.
  \ENDPROOF
\end{proof}

\subsection{Enhancement}

As noted in Remark~\ref{rem:ortho:proj:r-2p1}, the $L^2$-projection
operator $\PizP{\rs-2\pone}:\VhPrp{\rs}\to\PS{\rs}(\P)$ is computable
from the degrees of freedom \TERM{D3}.
Instead, to compute the orthogonal projection onto the polynomial
subspace $\PS{\rs-\pone}(\P)$, we need to modify the space definition
as follows thus obtaining the so called ``\emph{enhanced}'' virtual
element space.
Our construction follows the guidelines
in~\cite{Ahmad-Alsaedi-Brezzi-Marini-Russo:2013}.
First, we consider the mesh element $\P$ and the ``\emph{extended}''
virtual element space for $\rs\geq2\ptwo-1$ (recall that
$\ptwo\geq\pone$) defined as
\begin{multline}\label{eq:vem-space-higher:extended}
  \VhtPrp{\rs} := \Big\{
  \vsh\in\HS{\ptwo}(\P):\,
  \Delta^{\ptwo}\vsh\in\PS{\rs-\pone}(\P),\, \partial^j_n\vsh\in\PS{\rs-\js}(\E),\,\\
  \js=0,\ldots,\ptwo-1~\forall\E\in\partial\P
  \Big\}.
\end{multline}
Then, we define the enhanced virtual element space as
\begin{multline} 
  \WhPrp{\rs} := \bigg\{
  \vsh\in\VhtPrp{\rs}:\,
  \int_\P\vsh\qs\dx
  =
  \int_\P\PiPr{\rs-\pone}\vsh\qs\dx
  \\
  \quad \forall\qs\in\PS{\rs-\pone}\setminus\PS{\rs-2\pone}(\P)\bigg\}.
  \label{eq:enhanced:VEM:space}
\end{multline}
The polynomial space $\PS{\rs}(\P)$ is a subspace of $\WhPrp{\rs}$
and, thus, of $\VhtPrp{\rs}$, and the elliptic projection
$\PiPr{\rs-\pone}:\VhtPrp{\rs}\to\PS{\rs-\pone}(\P)$ that is defined
in~\eqref{eq:poly:Pi:A}-\eqref{eq:poly:Pi:B} is still computable and
only depends on the degrees of freedom \TERM{D1}, \TERM{D2} and
\TERM{D3}.
This assertion can easily be proved by repeating the argument of
Lemma~\ref{lemma:elliptic:projection}.

\medskip
The virtual element functions of the space $\VhtPrp{\rs}$ are uniquely
characterized by the set of degrees of freedom \TERM{D1}, \TERM{D2},
\TERM{D3} and the set of additional degrees of freedom
\TERM{\widetilde{D3}}
\begin{description}
\item\TERM{\widetilde{D3}}
  $\displaystyle\hP^{-2}\int_{\P}\qs\vsh\dx$ for any
  $\qs\in\PS{r-\pone}(\P)\setminus\PS{\rs-2\pone}(\P)$.
\end{description}
We state the unisolvence of these degrees of freedom in the following
lemma.
The proof is equal to the proof of Lemma~\ref{lemma:unisolvence}
(consider the degrees of freedom $(\TERM{D3},\TERM{\widetilde{D3}})$
instead of \TERM{D3}) and is omitted.
\begin{lemma}
  \label{lemma:unisolvence:enhanced}
  The degrees of freedom \TERM{D1}, \TERM{D2}, \TERM{D3},
  \TERM{\widetilde{D3}} are unisolvent in the virtual element space
  $\VhtPrp{r}$.
\end{lemma}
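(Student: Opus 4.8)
The plan is to follow verbatim the structure of the proof of Lemma~\ref{lemma:unisolvence}, making the single change that the set \TERM{D3} is replaced everywhere by the enlarged set $(\TERM{D3},\TERM{\widetilde{D3}})$. First I would verify, by the counting argument of Remark~\ref{remark:dim:local:space}, that the total number of degrees of freedom \TERM{D1}, \TERM{D2}, \TERM{D3}, \TERM{\widetilde{D3}} equals $\dim\VhtPrp{\rs}$. The only discrepancy with respect to the original count is in the elemental (bulk) contribution: in the extended space~\eqref{eq:vem-space-higher:extended} the datum $\Delta^{\ptwo}\vsh$ ranges over $\PS{\rs-\pone}(\P)$ instead of $\PS{\rs-2\pone}(\P)$, so the bulk dimension grows by exactly $\CARD{\PS{\rs-\pone}(\P)}-\CARD{\PS{\rs-2\pone}(\P)}$, which is precisely the number of moment conditions in \TERM{\widetilde{D3}}. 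Hence \TERM{D3} and \TERM{\widetilde{D3}} jointly supply the moments of $\vsh$ against a full basis of $\PS{\rs-\pone}(\P)$, and the two counts agree (the edge and vertex dofs \TERM{D1}-\TERM{D2} are unchanged).

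Second, I would prove injectivity: assuming that all degrees of freedom of a function $\vsh\in\VhtPrp{\rs}$ vanish, I show $\vsh=0$. As before, vanishing of \TERM{D1}-\TERM{D2} forces the edge traces of $\vsh$ and of its normal derivatives $\partial^{\js}_n\vsh$, $\js=0,\ldots,\ptwo-1$, together with all their tangential derivatives, to vanish on $\partial\P$; vanishing of \TERM{D3} together with \TERM{\widetilde{D3}} now forces all elemental moments of $\vsh$ against $\PS{\rs-\pone}(\P)$ to vanish. I would then insert these facts into the integration-by-parts identities~\eqref{eq:poly:intbyparts:odd:p} (for odd $\ptwo$) and~\eqref{eq:poly:intbyparts:even:p} (for even $\ptwo$), which continue to hold unchanged since they use only $\vsh\in\HS{\ptwo}(\P)$.

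The one place requiring care, and the crux of why \TERM{\widetilde{D3}} is introduced, is the volume term $\int_{\P}(\Delta^{\ptwo}\vsh)\,\vsh\,\dx$. In the extended space $\Delta^{\ptwo}\vsh$ is a polynomial of degree $\rs-\pone$, so this integral is a moment of $\vsh$ against a polynomial of that degree; it vanishes precisely because the combined data $(\TERM{D3},\TERM{\widetilde{D3}})$ annihilate all of $\PS{\rs-\pone}(\P)$, whereas \TERM{D3} alone (covering only $\PS{\rs-2\pone}(\P)$) would not suffice. The edge integrals vanish by the identical argument used in Lemma~\ref{lemma:unisolvence}: expanding $\Delta^{\mu}\vsh$ via~\eqref{eq:proof:unisolvence:00} reduces every boundary term to normal derivatives $\partial^{\ell}_n\vsh$, $0\le\ell\le\ptwo-1$, and their tangential derivatives, all already shown to vanish. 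Consequently both right-hand sides vanish, so $\vsh\in\HS{\ptwo}_{0}(\P)$; since the two left-hand sides are norms on $\HS{\ptwo}_{0}(\P)$, I conclude $\vsh=0$. The main (and rather mild) obstacle is only the bookkeeping of the degree shift $\rs-2\pone\to\rs-\pone$, which must be tracked consistently in both the dimension count and the volume moment; every other step is literally that of Lemma~\ref{lemma:unisolvence}, which is why the full proof can be omitted.
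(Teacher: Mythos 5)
Your proposal is correct and is exactly the argument the paper intends: the paper omits this proof, stating it is identical to that of Lemma~\ref{lemma:unisolvence} with \TERM{D3} replaced by the pair $(\TERM{D3},\TERM{\widetilde{D3}})$, which is precisely the substitution you carry out. Your two additions --- the dimension bookkeeping $\CARD{\PS{\rs-\pone}(\P)}-\CARD{\PS{\rs-2\pone}(\P)}=\#\TERM{\widetilde{D3}}$ and the observation that the volume term $\int_{\P}(\Delta^{\ptwo}\vsh)\,\vsh\,\dx$ now requires moments against all of $\PS{\rs-\pone}(\P)$ --- are exactly the points the paper leaves implicit, and both are handled correctly.
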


\begin{remark}
  According to Lemma~\ref{lemma:unisolvence:enhanced}, the dimension
  of $\VhtPrp{\rs}$ must be equal to the cardinality of the set of the
  degrees of freedom \TERM{D1}, \TERM{D2}, \TERM{D3},
  \TERM{\widetilde{D3}}.
  This statement can also be proved by a counting argument.
\end{remark}

Next, we want to prove that the degrees of freedom \TERM{D1},
\TERM{D2} and \TERM{D3} are unisolvent in the enhanced space
$\WhPrp{\rs}$.
To this end, we first need to establish a technical result.
Consider the set of linear, bounded functionals
$\lambda^{\tTERM{D1}}_{\ell_1},\lambda^{\tTERM{D2}}_{\ell_2},\lambda^{\tTERM{D3}}_{\ell_3}:\VhtPrp{\rs}\to\REAL$,
which respectively return the degrees of freedom \TERM{D1}, \TERM{D2}
and \TERM{D3} when applied to a virtual element function
$\vsh\in\VhtPrp{\rs}$.
The indices $\ell_1$, $\ell_2$, and $\ell_3$ run from $1$ to
$\#\TERM{D1}$, $\#\TERM{D2}$ and $\#\TERM{D3}$, respectively, where
$\#(\calD)$ denotes the cardinality of the discrete set $\calD$.
Renumbering $\ell_2$ and $\ell_3$ may require the introduction of
suitable sets of basis functions for the polynomial spaces
$\PS{\rs-\js}(\E)$ and $\PS{\rs-2\pone}(\P)$ in \TERM{D2} and
\TERM{D3}, respectively.
We left this aspect undefined as this technicality is not crucial in
this presentation, although important in the practical implementations
of the method.
We introduce the additional set of linear functionals
$\lambda^{\tTERM{\widetilde{D3}}}_{\ell}$ that are such that
\begin{align*}
  \lambda^{\tTERM{\widetilde{D3}}}_{\tell_3}(\vsh) =
  \hP^{-2}\int_{\P}\qs_{\tell_3}\big( \PiPr{\rs-\pone} - \PizP{\rs-\pone} \big)\vsh\dx
  \qquad\tell_{3}=1,\ldots,\#\widetilde{D3}
\end{align*}
where $\big\{\qs_{\tell_3}\big\}_{\tell_3}$ is a basis of
$\PS{\rs-\pone}(\P)\setminus\PS{\rs-2\pone}(\P)$, and the index
$\tell_3$ runs from $1$ to $\#\TERM{\widetilde{D3}}$, the number of
degrees of freedom of \TERM{\widetilde{D3}}.

\medskip
Then, we collect these different types of functionals in the
functional set
\begin{align}
  \Lambda = \big(\lambda_{\ell}\big) = \Big(
  \lambda^{\tTERM{D1}}_{\ell_1},
  \lambda^{\tTERM{D2}}_{\ell_2},
  \lambda^{\tTERM{D3}}_{\ell_3},
  \lambda^{\tTERM{\widetilde{D3}}}_{\tell_3}
  \Big).
\end{align}
We assume that the integer index $\ell$ is consistent with a suitable
renumbering of such degrees of freedom; so that $\ell$ runs form $1$
to $m'=m+\#\TERM{\widetilde{D3}}=\textrm{dim}\,\VhtPrp{\rs}$ and
$m=\#\TERM{D1}+\#\TERM{D2}+\#\TERM{D3}$.
These functionals satisfy the property stated in the following lemma.

\begin{lemma}\label{lm:independent}
  The linear functionals $\Lambda$ are linearly independent.
\end{lemma}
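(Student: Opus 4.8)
The plan is to exhibit $\Lambda$ as the image of an already-known linearly independent family under an invertible, block-triangular change of functionals, so that linear independence is simply inherited. First I would introduce the ``plain'' $\TERM{\widetilde{D3}}$ functionals $\widehat{\lambda}_{\tell_3}(\vsh) = \hP^{-2}\int_{\P}\qs_{\tell_3}\vsh\dx$, for $\tell_3 = 1,\ldots,\#\TERM{\widetilde{D3}}$, i.e. the genuine moment functionals carried by the degrees of freedom $\TERM{\widetilde{D3}}$. By Lemma~\ref{lemma:unisolvence:enhanced} the degrees of freedom $\TERM{D1}$, $\TERM{D2}$, $\TERM{D3}$, $\TERM{\widetilde{D3}}$ are unisolvent on $\VhtPrp{\rs}$; since their number equals $\dim\VhtPrp{\rs}$, unisolvence is equivalent to linear independence of the associated functionals, so the family $\big(\lambda^{\tTERM{D1}}_{\ell_1}, \lambda^{\tTERM{D2}}_{\ell_2}, \lambda^{\tTERM{D3}}_{\ell_3}, \widehat{\lambda}_{\tell_3}\big)$ is linearly independent.

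The key step is to rewrite each modified functional $\lambda^{\tTERM{\widetilde{D3}}}_{\tell_3}$ in terms of this independent family. Since $\qs_{\tell_3}\in\PS{\rs-\pone}(\P)$ and $\PizP{\rs-\pone}$ is the $\LTWO$-orthogonal projection onto $\PS{\rs-\pone}(\P)$, its defining orthogonality yields $\int_{\P}\qs_{\tell_3}\,\PizP{\rs-\pone}\vsh\dx = \int_{\P}\qs_{\tell_3}\vsh\dx$, whence $\hP^{-2}\int_{\P}\qs_{\tell_3}\PizP{\rs-\pone}\vsh\dx = \widehat{\lambda}_{\tell_3}(\vsh)$. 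Consequently
\[
  \lambda^{\tTERM{\widetilde{D3}}}_{\tell_3}(\vsh) = \hP^{-2}\int_{\P}\qs_{\tell_3}\,\PiPr{\rs-\pone}\vsh\dx - \widehat{\lambda}_{\tell_3}(\vsh).
\]
The remaining term sees $\vsh$ only through $\PiPr{\rs-\pone}\vsh$, and the elliptic projection $\PiPr{\rs-\pone}$ is computable from $\TERM{D1}$, $\TERM{D2}$, $\TERM{D3}$ (as recalled in the enhancement subsection). Hence, as a functional of $\vsh$, this term is a fixed linear combination of $\lambda^{\tTERM{D1}}_{\ell_1}, \lambda^{\tTERM{D2}}_{\ell_2}, \lambda^{\tTERM{D3}}_{\ell_3}$.

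It then follows that the passage from $\big(\lambda^{\tTERM{D1}}, \lambda^{\tTERM{D2}}, \lambda^{\tTERM{D3}}, \widehat{\lambda}\big)$ to $\Lambda = \big(\lambda^{\tTERM{D1}}, \lambda^{\tTERM{D2}}, \lambda^{\tTERM{D3}}, \lambda^{\tTERM{\widetilde{D3}}}\big)$ is realized by a block lower-triangular matrix whose first three diagonal blocks are identities and whose last diagonal block is $-I$; its determinant is $\pm1\neq0$. An invertible linear transformation carries a linearly independent set to a linearly independent set, so $\Lambda$ is linearly independent, which is the claim.

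I would expect the only genuinely delicate point to be the bookkeeping that guarantees the $\PiPr{\rs-\pone}$ term truly factors through the $\TERM{D1}$--$\TERM{D3}$ functionals, so that it contributes only to the strictly lower-triangular part and leaves the diagonal block equal to $-I$; once that is secured, the argument reduces to the $\LTWO$-orthogonality identity and elementary linear algebra.
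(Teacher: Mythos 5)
Your proof is correct, and it is organized differently from the paper's: you argue on the dual side, the paper on the primal side. Both rest on the same two pillars --- Lemma~\ref{lemma:unisolvence:enhanced} (unisolvence of \TERM{D1}, \TERM{D2}, \TERM{D3}, \TERM{\widetilde{D3}} on $\VhtPrp{\rs}$, whose cardinality equals $m'=\textrm{dim}\,\VhtPrp{\rs}$) and the fact that $\PiPr{\rs-\pone}$ depends only on \TERM{D1}--\TERM{D3} --- but the paper takes a function $\vsh$ annihilated by all of $\Lambda$ and forward-substitutes through exactly the triangular structure you make explicit: vanishing \TERM{D1}--\TERM{D3} degrees of freedom give $\PiPr{\rs-\pone}\vsh=0$; then $\lambda^{\tTERM{\widetilde{D3}}}_{\tell_3}(\vsh)=0$ together with the $\LTWO$-orthogonality identity forces the plain \TERM{\widetilde{D3}} moments of $\vsh$ to vanish; unisolvence gives $\vsh=0$, and the trivial common kernel plus the dimension count yields independence. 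You instead start from the linearly independent family $\big(\lambda^{\tTERM{D1}}_{\ell_1},\lambda^{\tTERM{D2}}_{\ell_2},\lambda^{\tTERM{D3}}_{\ell_3},\widehat{\lambda}_{\tell_3}\big)$ and exhibit $\Lambda$ as its image under an invertible block-triangular transformation. What your route buys is that the triangular mechanism is explicit and $\Lambda$ is identified as a dual basis obtained by a unimodular change of functionals, not merely as an independent set; what the paper's route buys is that it never needs the one point you flag, namely that $\vsh\mapsto\hP^{-2}\int_{\P}\qs_{\tell_3}\PiPr{\rs-\pone}\vsh\dx$ is a \emph{linear combination} of the \TERM{D1}--\TERM{D3} functionals. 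That point is not a gap, but record its one-line justification: $\PiPr{\rs-\pone}$ is linear and factors through the linear \TERM{D1}--\TERM{D3} degree-of-freedom map into $\REAL^{m}$, and a linear functional factoring through a linear map is a linear combination of the coordinate functionals of that map (equivalently, expanding in the dual basis associated with the full set \TERM{D1}, \TERM{D2}, \TERM{D3}, \TERM{\widetilde{D3}}, the ``depends only on \TERM{D1}--\TERM{D3}'' property annihilates all $\widehat{\lambda}$-coefficients). With that sentence added, your block-triangular determinant argument closes the proof exactly as claimed.
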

\begin{proof}
  Let $\vsh\in\VhtPrp{\rs}$ such that $\lambda_{\ell}(\vsh)=0$ for all
  $\ell=1,\ldots,m'$.
  Now, the degrees of freedom \TERM{D1}, \TERM{D2} and \TERM{D3} of
  $\vsh$ are (obviously) zero as they are the values of the
  functionals $\lambda^{\tTERM{D1}}_{\ell_1}(\vsh)$,
  $\lambda^{\tTERM{D2}}_{\ell_2}(\vsh)$ and
  $\lambda^{\tTERM{D3}}_{\ell_3}(\vsh)$.
  Moreover, it holds that $\PiPr{\rs}\vsh=0$, and, hence,
  $\PiPr{\rs-\pone}\vsh=0$, as these projections only depend on the
  degrees of freedom \TERM{D1}, \TERM{D2} and \TERM{D3},
  cf.~Lemma~\ref{lemma:elliptic:projection}.
  Then, we observe that the definition of the orthogonal projection
  $\PizP{\rs-\pone}\vsh$ and the facts that
  $\lambda^{\tTERM{\widetilde{D3}}}_{\tell_3}(\vsh)=0$ and
  $\PiPr{\rs-\pone}\vsh=0$ imply that
  \begin{align}
    \int_{\P}\qs\vsh\dx =
    \int_{\P}\qs\PizP{\rs-\pone}\vsh\dx =
    \int_{\P}\qs\PiPr{\rs-\pone}\vsh\dx = 0
  \end{align}
  for all $\qs\in\PS{\rs-\pone}(\P)\setminus\PS{\rs-2\pone}(\P)$.
  Therefore, the degrees of freedom \TERM{\widetilde{D3}} of $\vsh$
  are equal to zero and finally $\vsh=0$ because the degrees of
  freedom \TERM{D1}, \TERM{D2}, \TERM{D3} and \TERM{\widetilde{D3}}
  are unisolvent in $\VhtPrp{\rs}$.
  This argument proves that the intersection of the kernels of all the
  linear functionals $\lambda_{\ell}$ contains only the virtual
  element function that is identically zero over $\P$, so that these
  linear functionals are necessarily linearly independent.
  \ENDPROOF
\end{proof}

Using the linear functionals $\Lambda$, we reformulate the definition
of space $\WhPrp{\rs}$ in the following equivalent way:
\begin{multline}
  \hfill
  \WhPrp{\rs} := \Big\{
  \vsh\in\VhtPrp{\rs}:\,\lambda^{\tTERM{\widetilde{D3}}}_{\tell_3}(\vsh)=0
  \quad\forall\tell_3=m+1,\ldots,m'
  \Big\}.
  \hfill
  \label{eq:enhanced:VEM:space:new}
\end{multline}
In other words, $\WhPrp{\rs}$ belongs to the intersection of the
kernels of all the additional linear functionals $\lambda_{\ell}$ with
$\ell=m+1,\ldots,m'$.
The space $\WhPrp{\rs}$ has the two important properties that are
stated in the following lemma.
\begin{lemma}
  \label{lemma:WhP:property}
  The virtual element space $\WhPrp{\rs}$ has the same dimension of
  the ``regular'' space $\VhPrp{\rs}$  
  and the set of degrees of
  freedom \TERM{D1}, \TERM{D2} and \TERM{D3} are unisolvent in
  $\WhPrp{\rs}$.
\end{lemma}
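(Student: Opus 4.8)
The plan is to deduce both assertions from the linear isomorphism hidden in Lemma~\ref{lm:independent}. That lemma establishes that the full family $\Lambda=(\lambda_\ell)$, collecting the functionals attached to \TERM{D1}, \TERM{D2}, \TERM{D3} and the modified moments $\lambda^{\tTERM{\widetilde{D3}}}_{\tell_3}$, is linearly independent. Since this family consists of $m'=m+\#\TERM{\widetilde{D3}}=\dim\VhtPrp{\rs}$ functionals (Lemma~\ref{lemma:unisolvence:enhanced}), these $m'$ independent functionals form a basis of the dual of $\VhtPrp{\rs}$; equivalently, the evaluation map $\vsh\mapsto(\lambda_\ell(\vsh))_{\ell=1}^{m'}$ is a linear isomorphism of $\VhtPrp{\rs}$ onto $\REAL^{m'}$. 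Both claims of the lemma then reduce to identifying $\WhPrp{\rs}$ with a coordinate subspace under this isomorphism, so the whole argument is a counting/linear-algebra exercise rather than an analytic one.

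For the dimension count I would invoke the equivalent characterization~\eqref{eq:enhanced:VEM:space:new}, which exhibits $\WhPrp{\rs}$ precisely as the common kernel of the last $m'-m=\#\TERM{\widetilde{D3}}$ functionals $\lambda^{\tTERM{\widetilde{D3}}}_{\tell_3}$. Because these belong to the linearly independent family $\Lambda$, imposing their vanishing drops the dimension by exactly $\#\TERM{\widetilde{D3}}$, whence $\dim\WhPrp{\rs}=m'-\#\TERM{\widetilde{D3}}=m=\#\TERM{D1}+\#\TERM{D2}+\#\TERM{D3}$. On the other side, Lemma~\ref{lemma:unisolvence} gives $\dim\VhPrp{\rs}=\#\TERM{D1}+\#\TERM{D2}+\#\TERM{D3}=m$, so the two dimensions agree.

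For unisolvence, having matched $\dim\WhPrp{\rs}$ with the number of degrees of freedom \TERM{D1}, \TERM{D2}, \TERM{D3}, it suffices to prove injectivity: any $\vsh\in\WhPrp{\rs}$ whose degrees of freedom \TERM{D1}, \TERM{D2}, \TERM{D3} all vanish must be zero. For such a $\vsh$ the functionals $\lambda^{\tTERM{D1}}_{\ell_1}(\vsh)$, $\lambda^{\tTERM{D2}}_{\ell_2}(\vsh)$, $\lambda^{\tTERM{D3}}_{\ell_3}(\vsh)$ vanish by definition, while membership in $\WhPrp{\rs}$ forces $\lambda^{\tTERM{\widetilde{D3}}}_{\tell_3}(\vsh)=0$ for every $\tell_3$ through~\eqref{eq:enhanced:VEM:space:new}. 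Hence every component $\lambda_\ell(\vsh)$ is zero, and the injectivity of the evaluation map yields $\vsh=0$.

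The one step demanding care is the distinction between the raw moment degrees of freedom \TERM{\widetilde{D3}}, built from $\int_\P\qs\vsh\dx$, and the modified functionals $\lambda^{\tTERM{\widetilde{D3}}}_{\tell_3}$, built from $\int_\P\qs(\PiPr{\rs-\pone}-\PizP{\rs-\pone})\vsh\dx$. The argument hinges entirely on the validity of the reformulation~\eqref{eq:enhanced:VEM:space:new}, which in turn rests on the identity $\int_\P\qs\vsh\dx=\int_\P\qs\PizP{\rs-\pone}\vsh\dx$ for $\qs\in\PS{\rs-\pone}(\P)$ together with the fact (Lemma~\ref{lemma:elliptic:projection}) that $\PiPr{\rs-\pone}$ is determined by \TERM{D1}, \TERM{D2}, \TERM{D3} alone. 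Once this equivalence is granted, no further obstacle remains; everything else is bookkeeping on dimensions and kernels.
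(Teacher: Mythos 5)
Your proposal is correct and follows essentially the same route as the paper: both arguments rest on Lemma~\ref{lm:independent} together with the reformulation~\eqref{eq:enhanced:VEM:space:new} of $\WhPrp{\rs}$ as the common kernel of the additional functionals $\lambda^{\tTERM{\widetilde{D3}}}_{\tell_3}$. The only difference is one of linear-algebra packaging: the paper makes the argument explicit through a Ciarlet-type dual basis $\psi_{\ell}$ with $\lambda_{\ell}(\psi_{\ell'})=\delta_{\ell,\ell'}$ (the first $m$ of which span $\WhPrp{\rs}$), whereas you obtain the dimension count from the codimension of the common kernel of $m'-m$ linearly independent functionals and then deduce unisolvence from dimension matching plus injectivity of the evaluation map.
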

\begin{proof}
  In view of Lemma~\ref{lm:independent} the linear functionals in
  $\Lambda$ are linearly independent and the cardinality of $\Lambda$,
  i.e., $m'=\#(\Lambda)$, is equal to the dimension of $\VhtPrp{\rs}$.
  Therefore, $\big(\P,\PS{\rs}(\P),\Lambda\big)$
  is a \emph{finite element} in the sense of Ciarlet,
  cf.~\cite{Ciarlet:2002}.
  So, there exists a set of $m'$ dual basis functions $\psi_{\ell}$
  such that
  \begin{align*}
    \lambda_{\ell}(\psi_{\ell'}) = \delta_{\ell,\ell'}
    \qquad\ell,\ell'=1,\ldots,m'.
  \end{align*}
  Now, it holds that
  $\lambda^{\tTERM{\widetilde{D3}}}_{\tell_3}(\psi_{\ell'})=\lambda_{\ell}(\psi_{\ell'})=0$
  for $\ell'=1,\ldots,m$, $\ell=m+1,\ldots,m'$ (and the corresponding
  values of the index $\tell_3$).
  This fact has the following consequences.
  The first $m$ linearly independent functions $\psi_{\ell'}$,
  $\ell'=1,\ldots,m$, belong to $\WhPrp{\rs}$, cf.
  formulation~\eqref{eq:enhanced:VEM:space:new}, since
  $\lambda^{\tTERM{\widetilde{D3}}}_{\tell_3}(\psi_{\ell'})=\lambda_{\ell}(\psi_{\ell'})=0$
  for $\ell=m+1,\ldots,m'$ (and corresponding indices $\tell_3$).
  This implies that $\textrm{dim}\,\WhPrp{\rs}\geq\ms$.
  Furthermore, according to the space definition
  \eqref{eq:enhanced:VEM:space:new} \emph{all} virtual element
  functions $\wsh\in\WhPrp{\rs}$ are such that
  $\lambda^{\tTERM{\widetilde{D3}}}_{\tell_3}(\wsh)=\lambda_{\ell}(\wsh)=0$
  for $\ell=m+1,\ldots,m'$.
  Therefore, such functions can be written as a linear combination of
  \emph{only} the first $m$ basis functions $\psi_{\ell'}$,
  $\ell'=1,\ldots,m$, are thus identified by the values of the linear
  functionals $\lambda^{\tTERM{D1}}_{\ell_1}\vsh$,
  $\lambda^{\tTERM{D2}}_{\ell_2}\vsh$ and
  $\lambda^{\tTERM{D3}}_{\ell_3}\vsh$.
  Consequently, all virtual element functions of $\WhPrp{\rs}$ are
  uniquely identified by the degrees of freedom \TERM{D1}, \TERM{D2},
  \TERM{D3} and, consequently, $\textrm{dim}\,\WhPrp{\rs}=\ms$.
  \ENDPROOF
\end{proof}

In view of Lemma~\ref{lemma:WhP:property}, the orthogonal projection
operator $\PizP{\rs-\pone}:\WhPrp{\rs}\to\PS{\rs-\pone}(\P)$ is
computable from the degrees of freedom \TERM{D1}, \TERM{D2} and
\TERM{D3}.

\medskip
Finally, we collect the local virtual element spaces into a
\emph{global} conforming virtual element space $\Whrp{r}$ defined
on $\Omega$ as
\begin{align}
  \Whrp{\rs} = \Big\{
  \wsh\in\HS{\pone}_{0}(\Omega)\cap\HS{\ptwo}(\Omega)\,:\,\restrict{\wsh}{\P}\in\WhPrp{\rs}\,\,\forall\P\in\Th
  \Big\},
  \label{eq:poly:global:space:enh}
\end{align}
where $\WhPrp{\rs}$ is the local space defined above.

\subsection{The virtual element bilinear form $\ash(\cdot,\cdot)$}

We discuss the definition of the bilinear form $\ash(\cdot,\cdot)$
that approximates the bilinear form $\as(\cdot,\cdot)$ in the virtual
element discretization~\eqref{eq:poly:VEM}.
This construction is the same for the ``regular'' virtual element
spaces~\eqref{eq:vem-space-lower} and~\eqref{eq:vem-space-higher} and
the ``enhanced'' space~\eqref{eq:enhanced:VEM:space}.
In this section we use the symbol $\VhPrp{\rs}$ to denote both choices
of the spaces.
However, such construction holds also for the enhanced virtual element
space~\eqref{eq:poly:global:space:enh}.
The symmetric bilinear form $\ash:\Vhrp{\rs}\times\Vhrp{\rs}\to\REAL$,
is written as the sum of local terms
\begin{align}
  \ash(\ush,\vsh) = \sum_{\P\in\Th}\ashP(\ush,\vsh),
\end{align}
where each local term $\ashP:\VhPrp{r}\times\VhPrp{r}\to\REAL$ is a
symmetric bilinear form.
We set
\begin{align}
  \ashP(\ush,\vsh) 
  = \asP(\PiPr{r}\ush,\PiPr{r}\vsh) 
  + \SP(\ush-\PiPr{r}\ush,\vsh-\PiPr{r}\vsh),
  \label{eq:poly:ah:def}
\end{align}
where $\SP:\VhPrp{r}\times\VhPrp{r}\to\mathbbm{R}$ provides the
stabilization term.
The stabilization form $\SP(\cdot,\cdot)$ is a symmetric, positive
definite bilinear form for which there exist two positive constants
$\sigma_*$ and $\sigma^*$ such that
\begin{align}
  \sigma_*\asP(\vsh,\vsh)\leq\SP(\vsh,\vsh)\leq\sigma^*\asP(\vsh,\vsh)
  \quad\forall\vsh\in\VhPrp{r}\textrm{~with~}\PiPr{r}\vsh=0.
  \label{eq:poly:S:stability:property}
\end{align}
The constants $\sigma_*$, $\sigma^*$ are independent of $\hh$ (and
$\P$).
A possible proof of the validity of
\eqref{eq:poly:S:stability:property} for the so called ``dofi-dofi''
stabilization in the context of arbitrarily regular conforming VEM can
be found in~\cite{Huang:2021} (for the case $\pone=2$ see
also~\cite{Beirao-Lovadina-Russo:2017,Chen-Huang:2018}).
This construction has the $\rs$-\emph{consistency} and
\emph{stability} properties stated in~\eqref{eq:poly:r-consistency}
and \eqref{eq:poly:stability},

\subsection{The virtual element approximation of the load term}\label{sec:rhs}

To approximate the right-hand side term of~\eqref{eq:poly:VEM} we
first assume the elemental decomposition
\begin{align}
  \bil{\fsh}{\vsh} = \sum_{\P\in\Th}\int_{\P}\fsh\vsh\,\dx.
  \label{eq:vem:rhs}
\end{align}
In Eq.~\eqref{eq:vem:rhs}, the elemental term $\restrict{\fsh}{\P}$ is
defined as
%
\begin{align}
  \restrict{\fsh}{\P} =
  \begin{cases}
    \Pizr{\rs-2\pone}\fs, & (a)~\textrm{if $\ptwo + 2\pone -1 \leq\rs$},\\[0.5em]
    \Pizr{\rs- \pone}\fs, & (b)~\textrm{if $\ptwo\leq\rs\leq\ptwo + 2\pone -2$}.
  \end{cases}
\end{align}
We discuss the two definitions of $\fsh$ given above separately.

\begin{remark}
  The right-hand side of \eqref{eq:vem:rhs} is fully computable by
  using only the degrees of freedom \TERM{D3} if $\rs\geq2\pone$ and
  we choose $\fsh$ as the piecewise polynomial approximation of $\fs$
  on $\Th$ in accordance with $(a)$.
  In such a case, we do not need to resort to the enhanced virtual
  element space defined in~\eqref{eq:enhanced:VEM:space}.
\end{remark}

Now, consider decomposition~\eqref{eq:vem:rhs} and definition $(a)$.
Since $\ptwo\geq\pone$, it holds that $\rs-2\pone\geq\ptwo-1$
(equivalently, $\rs\geq 3\pone-1$).
Thus, using the definition of the $\LTWO$-orthogonal projection, from
\eqref{eq:vem:rhs}, we find that
\begin{equation}
  \bil{\fsh}{\vsh} =
  \sum_{\P\in\Th}\int_{\P}\Pizr{\rs-2\pone}\fs\,\vsh\,\dx =
  \sum_{\P\in\Th}\int_{\P}\Pizr{\rs-2\pone}\,\fs\Pizr{\pone-1}\vsh\,\dx.
  \label{aux:1.1}
\end{equation}
Applying standard approximation results to~\eqref{aux:1.1} and
recalling that
$\vsh\in\Vhrp{\rs}\subset\HS{\pone}_{0}(\Omega)\cap\HS{\ptwo}(\Omega)$
yield the following estimate
\begin{align*}
  \bil{\fs - \fsh}{\vsh} \leq \Cs\hs^{\rs-\pone+1}
  \snorm{\vsh}{\pone}\snorm{\fs}{\rs-2\pone+1},
\end{align*}
for some positive constant $\Cs$ that is independent of $\hh$.
In particular, for $\pone=\ptwo\geq 2$ it is enough to choose
$r\geq2\ptwo+1$ (the case $\pone=\ptwo=2$ and $r\geq 5$ has been
originally treated in~\cite{Brezzi-Marini:2013}).
Note that for fixed values of $\pone$, larger values of the regularity
parameter $\ptwo$ ensure higher convergence rate for the approximation
of the right-hand side.
This is a specific attractive feature of arbitrarily regular
conforming VEM (which can not be exploited, e.g., in the nonconforming
setting).

\medskip
Now, consider decomposition~\eqref{eq:vem:rhs} and definition $(b)$.
Similarly to the previous case, using the definition of the
$\LTWO$-orthogonal projection yields
\begin{equation}
  \bil{\fsh}{\vsh} 
  = \sum_{\P\in\Th}\int_{\P}\Pizr{r-\pone}\fs\,\vsh\,\dx
  = \sum_{\P\in\Th}\int_{\P}\Pizr{r-\pone}\fs~\Pizr{0}\vsh\,\dx.
  \label{eq:aux:1:2}
\end{equation}
Applying again standard approximation results to~\eqref{eq:aux:1:2} we
we find that
\begin{align*}
  \bil{\fs - \fsh}{\vsh} \leq \Cs \hh^{\rs-\pone+2}
  \snorm{\vsh}{\pone} \snorm{\fs}{\rs-\pone+1}.
\end{align*}
For arbitrary values of $\pone$ and $\ptwo$, the use of the enhancement
approach might be avoided using arguments similar to those employed
in~\cite{Huang:2020}.

\subsection {Error analysis}
In this section, we briefly recall
a convergence
result in the energy norm \cite{Antonietti-Manzini-Verani:2019} (see also \cite{Antonietti-etal:2021,Huang:2021}) for the
approximation of \eqref{eq:poly:pblm:1}-\eqref{eq:poly:pblm:2}.
In particular, employing Theorem
\ref{theorem:poly:abstract:energy:norm} together with standard results of 
approximation (see, e.g., Reference
  \cite{BeiraodaVeiga-Brezzi-Marini-Russo:2016b,Beirao-Mora:2019,Huang:2021})
and the approximation properties of the right-hand
  side contained in Section~\ref{sec:rhs}.

\begin{theorem}
  \label{theorem:poly:energy:convg:rate}
   Let $\us\in H_0^{p_1}(\Omega)\cap H^{r+1}(\Omega)$
    be the solution of the polyharmonic
    problem~\eqref{eq:poly:pblm:1}-\eqref{eq:poly:pblm:2} and let $\ush\in\Vhrp{r}$ be
    the solution of the discrete problem~\eqref{eq:poly:VEM}.
    Assume that $\fs$ is  sufficiently regular. 
  Then, there exists a positive constant $C$ independent of $h$ such that
  \begin{align}
    \norm{\us-\ush}{\Vs}
    \leq\Cs\hh^{r-(p_1-1)}.
    \label{eq:poly:energy:convg:rate}
  \end{align}

\end{theorem}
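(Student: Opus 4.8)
The plan is to deduce the rate~\eqref{eq:poly:energy:convg:rate} directly from the abstract bound~\eqref{eq:poly:abstract:energy:norm} of Theorem~\ref{theorem:poly:abstract:energy:norm}, by estimating each of its three terms separately and then making a convenient choice of the interpolant $\usI\in\Vhrp{\rs}$ and of the piecewise polynomial $\us_{\pi}\in\PS{\rs}(\Th)$. Since the energy norm $\norm{\cdot}{\Vs}=(\as_{\pone}(\cdot,\cdot))^{1/2}$ is equivalent to the $\HS{\pone}$-seminorm on $\Vs$, every term will be measured against $\HS{\pone}$-type quantities, and the regularity hypothesis $\us\in\HS{\rs+1}(\Omega)$ supplies the additional $\rs+1-\pone=\rs-(\pone-1)$ orders of accuracy that drive the rate.

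First I would bound the best-approximation term $\norm{\us-\us_{\pi}}{\hh}$. As this broken seminorm $\big(\sum_{\P\in\Th}\asP(\cdot,\cdot)\big)^{1/2}$ involves only derivatives of order $\pone$, and $\PS{\rs}(\Th)$ contains all piecewise polynomials of degree $\rs$, an elementwise Bramble--Hilbert (Dupont--Scott) argument on each star-shaped element yields, for a suitable $\us_{\pi}$,
\begin{align*}
  \norm{\us-\us_{\pi}}{\hh}\leq\Cs\,\hh^{\rs+1-\pone}\snorm{\us}{\rs+1},
\end{align*}
with $\Cs$ depending only on the shape-regularity of the mesh family.

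Next I would treat the interpolation term $\norm{\us-\usI}{\Vs}$. The degrees of freedom \TERM{D1}--\TERM{D3}, unisolvent by Lemma~\ref{lemma:unisolvence}, define the virtual element interpolant $\usI$ of $\us$, and assumption~\ASSUM{H1} guarantees $\usI\in\Vs$ so that the global energy norm is well defined. The standard VEM interpolation estimates referenced before the statement, available precisely because the local spaces are $\HS{\ptwo}$-conforming and contain $\PS{\rs}(\P)$, then give
\begin{align*}
  \norm{\us-\usI}{\Vs}\leq\Cs\,\hh^{\rs+1-\pone}\snorm{\us}{\rs+1}.
\end{align*}
Finally, the load-consistency term is exactly what was estimated in Section~\ref{sec:rhs}: for definition $(a)$ one has $\bil{\fs-\fsh}{\vsh}\leq\Cs\,\hh^{\rs-\pone+1}\snorm{\vsh}{\pone}\snorm{\fs}{\rs-2\pone+1}$, and for definition $(b)$ the faster rate $\hh^{\rs-\pone+2}$; dividing by $\norm{\vsh}{\Vs}\simeq\snorm{\vsh}{\pone}$ and passing to the supremum over $\vsh\in\Vhrp{\rs}$ gives $\norm{\fh-\fs}{\Vhrs{\rs}}\leq\Cs\,\hh^{\rs-\pone+1}$, provided $\fs$ is smooth enough for the relevant seminorm of $\fs$ to be finite, which is the meaning of the hypothesis that $\fs$ be sufficiently regular.

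Summing the three bounds, absorbing all constants into a single $\Cs$ independent of $\hh$, and retaining the slowest (and hence dominant) power $\hh^{\rs-(\pone-1)}$ yields~\eqref{eq:poly:energy:convg:rate}. The main obstacle is the interpolation estimate: in contrast with classical finite elements, the interpolant $\usI$ is accessible only through its degrees of freedom rather than through an explicit nodal formula, so controlling $\norm{\us-\usI}{\Vs}$ to optimal order requires scaled trace and inverse inequalities on star-shaped polygons combined with the Bramble--Hilbert lemma; once these localized estimates are in hand, the remaining steps are routine.
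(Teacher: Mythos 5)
Your proposal is correct and follows exactly the route the paper itself indicates: it invokes the abstract estimate of Theorem~\ref{theorem:poly:abstract:energy:norm} and bounds its three terms via elementwise Bramble--Hilbert polynomial approximation, standard VEM interpolation estimates, and the load-term bounds of Section~\ref{sec:rhs}, all of order $\hh^{\rs+1-\pone}=\hh^{\rs-(\pone-1)}$. The paper delegates these ingredients to the cited literature rather than spelling them out, so your write-up simply supplies the details of the same argument, including a fair acknowledgment that the VEM interpolation estimate is the one genuinely technical ingredient.
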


\begin{remark}
Convergence estimates in lower order norms can be established provided
that classical duality arguments can be used and that the polynomial
approximation order $r$ is sufficient
large\cite{Chinosi-Marini:2016,Antonietti-Manzini-Verani:2019,Antonietti-etal:2021}.
\end{remark}

\section{Conclusion}
\label{sec6:conclusions}

We reviewed the construction of highly regular virtual element
spaces for the conforming approximations in two spatial dimensions
of elliptic problems of order $\pone\geq 1$.
The resulting finite dimensional virtual spaces are subspaces of
$\HS{\ptwo}(\Omega)$, $\ptwo\geq \pone$.
We presented an abstract convergence result in a suitably defined
energy norm. Moreover, after discussing the construction of the
approximation spaces and major aspects such as the choice and
unisolvence of the degrees of freedom, we provided specific examples
of highly regular virtual spaces, corresponding to various practical
cases.
Finally, a detailed discussion of the properties of the ``enhanced''
formulation of the virtual element spaces is provided.

\section*{Acknowledgments}
PFA, SVT and MV acknowledge the financial support of PRIN research
grant number 201744KLJL ``\emph{Virtual Element Methods: Analysis and
Applications}'' funded by MIUR.
GM acknowledges the financial support of the LDRD program of Los
Alamos National Laboratory under project number 20220129ER.
Los Alamos National Laboratory is operated by Triad National Security,
LLC, for the National Nuclear Security Administration of
U.S. Department of Energy (Contract No. 89233218CNA000001).
The Authors are affiliated to GNCS-INdAM (Italy).



\begin{thebibliography}{10}
\providecommand{\url}[1]{\texttt{#1}}
\providecommand{\urlprefix}{URL }
\providecommand{\doi}[1]{https://doi.org/#1}

\bibitem{Ahmad-Alsaedi-Brezzi-Marini-Russo:2013}
Ahmad, B., Alsaedi, A., Brezzi, F., Marini, L.D., Russo, A.: Equivalent
  projectors for virtual element methods. Comput. Math. Appl.  \textbf{66}(3),
  376--391 (2013). \doi{10.1016/j.camwa.2013.05.015}

\bibitem{Aldakheel-Hudobivnik-Hussein-Wriggers:2018}
Aldakheel, F., Hudobivnik, B., Hussein, A., Wriggers, P.: Phase-field modeling
  of brittle fracture using an efficient virtual element scheme. Comput.
  Methods Appl. Mech. Engrg.  \textbf{341},  443--466 (2018).
  \doi{10.1016/j.cma.2018.07.008}

\bibitem{Antonietti-BeiraodaVeiga-Scacchi-Verani:2016}
Antonietti, P.F., {Beir\~{a}o~da~Veiga}, L., Scacchi, S., Verani, M.: A {$C^1$}
  virtual element method for the {C}ahn-{H}illiard equation with polygonal
  meshes. SIAM J. Numer. Anal.  \textbf{54}(1),  34--56 (2016).
  \doi{10.1137/15M1008117}

\bibitem{Antonietti-etal:2021}
Antonietti, P.F., Berrone, S., Borio, A., D’Auria, A., Verani, M., Weisser,
  S.: Anisotropic a posteriori error estimate for the virtual element method.
  IMA J. Numer. Anal.  (02 2021). \doi{10.1093/imanum/drab001}

\bibitem{Antonietti-Bertoluzza-Prada-Verani:2020}
Antonietti, P.F., Bertoluzza, S., Prada, D., Verani, M.: The virtual element
  method for a minimal surface problem. Calcolo  \textbf{57}(4),  1--21 (2020).
  \doi{10.1007/s10092-020-00388-0}

\bibitem{Antonietti-Bruggi-Scacchi-Verani:2017}
Antonietti, P.F., Bruggi, M., Scacchi, S., Verani, M.: On the virtual element
  method for topology optimization on polygonal meshes: {A} numerical study.
  Comput. Math. Appl.  \textbf{74}(5),  1091--1109 (2017).
  \doi{10.1016/j.camwa.2017.05.025}

\bibitem{Antonietti-Giani-Houston:2013}
Antonietti, P.F., Giani, S., Houston, P.: {$hp$}-version composite
  discontinuous {G}alerkin methods for elliptic problems on complicated
  domains. SIAM J. Sci. Comput.  \textbf{35}(3),  A1417--A1439 (2013).
  \doi{10.1137/120877246}

\bibitem{Antonietti-Manzini-Mazzieri-Mourad-Verani:2021}
Antonietti, P.F., Manzini, G., Mazzieri, I., Mourad, H., Verani, M.: The
  arbitrary-order virtual element method for linear elastodynamics models:
  {C}onvergence, stability and dispersion-dissipation analysis. Internat. J.
  Numer. Methods Engrg.  \textbf{122},  934–971 (2021)

\bibitem{Antonietti-Manzini-Scacchi-Verani:2021}
Antonietti, P.F., Manzini, G., Scacchi, S., Verani, M.: A review on arbitrarily
  regular virtuale element methods for elliptic partial differential equations.
  arXiv technical report arXiv:2104.03402 (2021)

\bibitem{Antonietti-Manzini-Verani:2019}
Antonietti, P.F., Manzini, G., Verani, M.: The conforming virtual element
  method for polyharmonic problems. Comput. Math. Appl.  \textbf{79}(7),
  2021--2034 (2020). \doi{10.1016/j.camwa.2019.09.022}

\bibitem{Antonietti-Mascotto-Verani:2018}
Antonietti, P.F., Mascotto, L., Verani, M.: A multigrid algorithm for the
  {$p$}-version of the virtual element method. ESAIM Math. Model. Numer. Anal.
  \textbf{52}(1),  337--364 (2018). \doi{10.1051/m2an/2018007}

\bibitem{Argyris-Fried-Scharpf:1968}
Argyris, J.H., Fried, I., Scharpf, D.W.: The {TUBA} family of plate elements
  for the matrix displacement method. Aeronaut. J. R. Aeronaut. Soc.
  \textbf{72},  701--709 (1968)

\bibitem{ArtiolideMirandaLovadinaPatruno:2017}
Artioli, E., de~Miranda, S., Lovadina, C., Patruno, L.: A stress/displacement
  virtual element method for plane elasticity problems. Comput. Methods Appl.
  Mech. Engrg.  \textbf{325},  155--174 (2017). \doi{10.1016/j.cma.2017.06.036}

\bibitem{Artioli-deMiranda-Lovadina-Patruno:2018}
Artioli, E., de~Miranda, S., Lovadina, C., Patruno, L.: A family of virtual
  element methods for plane elasticity problems based on the
  {H}ellinger-{R}eissner principle. Comput. Methods Appl. Mech. Engrg.
  \textbf{340},  978--999 (2018). \doi{10.1016/j.cma.2018.06.020}

\bibitem{AyusodeDios-Lipnikov-Manzini:2016}
{Ayuso~de~Dios}, B., Lipnikov, K., Manzini, G.: The non-conforming virtual
  element method. ESAIM Math. Model. Numer.  \textbf{50}(3),  879--904 (2016)

\bibitem{BeiraodaVeiga-Brezzi-Cangiani-Manzini-Marini-Russo:2013}
{Beir\~{a}o~da~Veiga}, L., Brezzi, F., Cangiani, A., Manzini, G., Marini, L.D.,
  Russo, A.: Basic principles of virtual element methods. Math. Models Methods
  Appl. Sci.  \textbf{23}(1),  199--214 (2013). \doi{10.1142/S0218202512500492}

\bibitem{BeiraodaVeiga-Brezzi-Marini-Russo:2016a}
{Beir\~ao~da~Veiga}, L., Brezzi, F., Marini, L.D., Russo, A.: {H}(div) and
  {H}(curl)-conforming {VEM}. Numer. Math.  \textbf{133}(2),  303--332 (2016)

\bibitem{BeiraodaVeiga-Brezzi-Marini-Russo:2016c}
{Beir\~ao~da~Veiga}, L., Brezzi, F., Marini, L.D., Russo, A.: Mixed virtual
  element methods for general second order elliptic problems on polygonal
  meshes. ESAIM: Mathematical Modelling and Numerical Analysis  \textbf{50}(3),
   727--747 (2016)

\bibitem{BeiraodaVeiga-Brezzi-Marini-Russo:2016b}
{Beir\~ao~da~Veiga}, L., Brezzi, F., Marini, L.D., Russo, A.: Virtual element
  methods for general second order elliptic problems on polygonal meshes. Math.
  Models Methods Appl. Sci.  \textbf{26}(4),  729--750 (2016)

\bibitem{BeiraodaVeiga-Dassi-Russo:2020}
{Beir\~{a}o~da~Veiga}, L., Dassi, F., Russo, A.: A {$C^1$} virtual element
  method on polyhedral meshes. Comput. Math. Appl.  \textbf{79}(7),  1936--1955
  (2020). \doi{10.1016/j.camwa.2019.06.019}

\bibitem{BeiraodaVeiga-Lipnikov-Manzini:2014}
{Beir\~ao~da~Veiga}, L., Lipnikov, K., Manzini, G.: The Mimetic Finite
  Difference Method, MS\&A. Modeling, Simulations and Applications, vol.~11.
  Springer, {I} edn. (2014)

\bibitem{BeiraodaVeiga-Lovadina-Vacca:2016}
{Beir\~ao~da~Veiga}, L., Lovadina, C., Vacca, G.: Divergence free virtual
  elements for the {S}tokes problem on polygonal meshes. ESAIM: M2AN
  \textbf{51}(2),  509--535 (2017). \doi{10.1051/m2an/2016032}

\bibitem{BeiraodaVeiga-Lovadina-Vacca:2018}
{Beir\~{a}o~da~Veiga}, L., Lovadina, C., Vacca, G.: Virtual elements for the
  {N}avier-{S}tokes problem on polygonal meshes. SIAM J. Numer. Anal.
  \textbf{56}(3),  1210--1242 (2018). \doi{10.1137/17M1132811}

\bibitem{BeiraodaVeiga-Manzini:2014}
{Beir\~ao~da~Veiga}, L., Manzini, G.: A virtual element method with arbitrary
  regularity. IMA J. Numer. Anal.  \textbf{34}(2),  782--799 (2014)

\bibitem{BeiraodaVeiga-Manzini:2015}
{Beir\~{a}o~da~Veiga}, L., Manzini, G.: Residual {\it a posteriori} error
  estimation for the virtual element method for elliptic problems. ESAIM Math.
  Model. Numer. Anal.  \textbf{49}(2),  577--599 (2015).
  \doi{10.1051/m2an/2014047}

\bibitem{Bell:1969}
Bell, K.: A refined triangular plate bending finite element. Int. J. Numer.
  Meth. Eng.  \textbf{1}(1),  101--122 (1969). \doi{10.1002/nme.1620010108},
  cited By 156

\bibitem{Benedetto-Berrone-Borio:2016}
Benedetto, M.F., Berrone, S., Borio, A.: The virtual element method for
  underground flow situations in fractured data. In: Advances in discretization
  methods, SEMA SIMAI Springer Ser., vol.~12, pp. 167--186. Springer, [Cham]
  (2016)

\bibitem{Benvenuti-Chiozzi-Manzini-Sukumar:2019}
Benvenuti, E., Chiozzi, A., Manzini, G., Sukumar, N.: Extended virtual element
  method for the {L}aplace problem with singularities and discontinuities.
  Comput. Methods Appl. Mech. Engrg.  \textbf{356},  571 -- 597 (2019)

\bibitem{Blum-Rannacher:1980}
Blum, H., Rannacher, R.: On the boundary value problem of the biharmonic
  operator on domains with angular corners. Math. Methods Appl. Sci.
  \textbf{2}(4),  556--581 (1980). \doi{10.1002/mma.1670020416}

\bibitem{Brenner-Scott:2008}
Brenner, S.C., Scott, R.: The mathematical theory of finite element methods,
  vol.~15. Springer Science \& Business Media (2008)

\bibitem{Brenner-Sung:2019}
Brenner, S.C., Sung, L.Y.: Virtual enriching operators. Calcolo
  \textbf{56}(4),  1--25 (2019). \doi{10.1007/s10092-019-0338-z}

\bibitem{Brezzi-Falk-Marini:2014}
Brezzi, F., Falk, R.S., Marini, L.D.: Basic principles of mixed virtual element
  methods. ESAIM Math. Model. Numer. Anal.  \textbf{48}(4),  1227--1240 (2014)

\bibitem{Brezzi-Marini:2013}
Brezzi, F., Marini, L.D.: Virtual element methods for plate bending problems.
  Comput. Methods Appl. Mech. Engrg.  \textbf{253},  455--462 (2013).
  \doi{10.1016/j.cma.2012.09.012}

\bibitem{Cangiani-Dong-Georgoulis-Houston:2017}
Cangiani, A., Dong, Z., Georgoulis, E.H., Houston, P.: {$hp$}-version
  discontinuous {G}alerkin methods on polygonal and polyhedral meshes.
  SpringerBriefs in Mathematics, Springer (2017)

\bibitem{Certik-Gardini-Manzini-Vacca:2018:ApplMath:journal}
Certik, O., Gardini, F., Manzini, G., Vacca, G.: The virtual element method for
  eigenvalue problems with potential terms on polytopic meshes. Applications of
  Mathematics  \textbf{63}(3),  333--365 (2018)

\bibitem{Chen-Huang:2018}
Chen, L., Huang, J.: Some error analysis on virtual element methods. Calcolo
  \textbf{55}(1),  1--23 (2018)

\bibitem{Chi-Pereira-Menezes-Paulino:2020}
Chi, H., Pereira, A., Menezes, I.F., Paulino, G.H.: Virtual element method
  ({VEM})-based topology optimization: {A}n integrated framework. Struct.
  Multidiscip. Optim.  \textbf{62}(3),  1089--1114 (2020).
  \doi{10.1007/s00158-019-02268-w}

\bibitem{Chinosi-Marini:2016}
Chinosi, C., Marini, L.D.: Virtual element method for fourth order problems:
  {$L^2$}-estimates. Comput. Math. Appl.  \textbf{72}(8),  1959--1967 (2016).
  \doi{10.1016/j.camwa.2016.02.001}

\bibitem{Ciarlet:2002}
Ciarlet, P.G.: The finite element method for elliptic problems. Classics in
  Applied Mathematics  \textbf{40},  1--511 (2002)

\bibitem{Clough-Tocher:1965}
Clough, R.W., Tocher, J.L. (eds.): Finite element stiffness matrices for
  analysis of plates in bending. Proceedings of the Conference on Matrix
  Methods in Structural Mechanics (1965)

\bibitem{Cockburn-Dong-Guzman:2008}
Cockburn, B., Dong, B., Guzm\'{a}n, J.: A superconvergent {LDG}-hybridizable
  {G}alerkin method for second-order elliptic problems. Math. Comp.
  \textbf{77}(264),  1887--1916 (2008). \doi{10.1090/S0025-5718-08-02123-6}

\bibitem{DiPietro-Droniou:2020}
Di~Pietro, D.A., Droniou, J.: The Hybrid High-Order Method for Polytopal
  Meshes: Design, Analysis, and Applications. MS$\&$A, Springer (2020)

\bibitem{Gazzola-Grunau-Sweers:1991}
Gazzola, F., Grunau, H.C., Sweers, G.: Polyharmonic boundary value problems,
  Lecture Notes in Mathematics, vol.~1991. Springer-Verlag, Berlin (2010).
  \doi{10.1007/978-3-642-12245-3}

\bibitem{Wu-Lin-Hu:2021}
Hu, J., Lin, T., Wu, Q.: A construction of ${C}^r$ conforming finite element
  spaces in any dimension (2021), arXiv:2103.14924

\bibitem{Zhang-Hu:2015}
Hu, J., Zhang, S.: The minimal conforming {$H^k$} finite element spaces on
  {$\mathbb{R}^n$} rectangular grids. Math. Comp.  \textbf{84}(292),  563--579
  (2015). \doi{10.1090/S0025-5718-2014-02871-8}

\bibitem{Huang:2021}
Huang, X.: ${H}^m$-conforming virtual elements in arbitrary dimension (2021),
  arXiv:2105.12973

\bibitem{Huang:2020}
Huang, X.: Nonconforming virtual element method for {$2m$}th order partial
  differential equations in {$\mathbb{R}^n$} with {$m>n$}. Calcolo
  \textbf{57}(4),  Paper No. 42, 38 (2020)

\bibitem{subdiffusion:2021}
Li, M., Zhao, J., Huang, C., Chen, S.: Conforming and nonconforming vems for
  the fourth-order reaction–subdiffusion equation: a unified framework. IMA
  J. Numer. Anal.  (2021). \doi{10.1093/imanum/drab030}

\bibitem{Lovadina-Mora-Velasquez:2019}
Lovadina, C., Mora, D., Vel\'{a}squez, I.: A virtual element method for the von
  {K}\'{a}rm\'{a}n equations. ESAIM Math. Model. Numer. Anal.  \textbf{55}(2),
  533--560 (2021). \doi{10.1051/m2an/2020085}

\bibitem{Mascotto-Perugia-Pichler:2019a}
Mascotto, L., Perugia, I., Pichler, A.: A nonconforming {T}refftz virtual
  element method for the {H}elmholtz problem. Math. Models Methods Appl. Sci.
  \textbf{29}(9),  1619--1656 (2019). \doi{10.1142/S0218202519500301}

\bibitem{Mascotto-Perugia-Pichler:2019}
Mascotto, L., Perugia, I., Pichler, A.: A nonconforming {T}refftz virtual
  element method for the {H}elmholtz problem: numerical aspects. Comput.
  Methods Appl. Mech. Engrg.  \textbf{347},  445--476 (2019).
  \doi{10.1016/j.cma.2018.12.039}

\bibitem{Mora-Rivera-Velasquez:2018}
Mora, D., Rivera, G., Vel\'{a}squez, I.: A virtual element method for the
  vibration problem of {K}irchhoff plates. ESAIM Math. Model. Numer. Anal.
  \textbf{52}(4),  1437--1456 (2018). \doi{10.1051/m2an/2017041}

\bibitem{Mora-Silgado:2021}
Mora, D., Silgado, A.: A ${C}^1$ virtual element method for the stationary
  quasi-geostrophic equations of the ocean. Comput. Math. Appl.  (2021).
  \doi{https://doi.org/10.1016/j.camwa.2021.05.022}

\bibitem{Mora-Velasquez:2018}
Mora, D., Vel\'{a}squez, I.: A virtual element method for the transmission
  eigenvalue problem. Math. Models Methods Appl. Sci.  \textbf{28}(14),
  2803--2831 (2018). \doi{10.1142/S0218202518500616}

\bibitem{Mora-Velasquez:2020}
Mora, D., Vel\'{a}squez, I.: Virtual element for the buckling problem of
  {K}irchhoff-{L}ove plates. Comput. Methods Appl. Mech. Engrg.  \textbf{360},
  112687, 22 (2020). \doi{10.1016/j.cma.2019.112687}

\bibitem{Park-Chi-Paulino:2019}
Park, K., Chi, H., Paulino, G.H.: On nonconvex meshes for elastodynamics using
  virtual element methods with explicit time integration. Comput. Methods Appl.
  Mech. Engrg.  \textbf{356},  669--684 (2019). \doi{10.1016/j.cma.2019.06.031}

\bibitem{Park-Chi-Paulino:2020}
Park, K., Chi, H., Paulino, G.H.: Numerical recipes for elastodynamic virtual
  element methods with explicit time integration. Internat. J. Numer. Methods
  Engrg.  \textbf{121}(1),  1--31 (2020). \doi{10.1002/nme.6173}

\bibitem{Perugia-Pietra-Russo:2016}
Perugia, I., Pietra, P., Russo, A.: A plane wave virtual element method for the
  {H}elmholtz problem. ESAIM Math. Model. Num.  \textbf{50}(3),  783--808
  (2016)

\bibitem{Sukumar-Tabarraei:2004}
Sukumar, N., Tabarraei, A.: Conforming polygonal finite elements. Internat. J.
  Numer. Methods Engrg.  \textbf{61}(12),  2045--2066 (2004).
  \doi{10.1002/nme.1141}

\bibitem{Beirao-Mora:2019}
Beir\~{a}o~da Veiga, L., Mora, D., Rivera, G.: Virtual elements for a
  shear-deflection formulation of {R}eissner-{M}indlin plates. Math. Comp.
  \textbf{88}(315),  149--178 (2019)

\bibitem{Beirao-Lovadina-Russo:2017}
Beir\~{a}o~da Veiga, L., Lovadina, C., Russo, A.: Stability analysis for the
  virtual element method. Math. Models Methods Appl. Sci.  \textbf{27}(13),
  2557--2594 (2017)

\bibitem{Wriggers-Rust-Reddy:2016}
Wriggers, P., Rust, W.T., Reddy, B.D.: A virtual element method for contact.
  Comput. Mech.  \textbf{58}(6),  1039--1050 (2016).
  \doi{10.1007/s00466-016-1331-x}

\bibitem{Zhang:2009}
Zhang, S.: A family of {3D} continuously differentiable finite elements on
  tetrahedral grids. Appl. Numer. Math.  \textbf{59}(1),  219--233 (2009)

\bibitem{Zhang:2016}
Zhang, S.: A family of differentiable finite elements on simplicial grids in
  four space dimensions. Math. Numer. Sin.  \textbf{38}(3),  309--324 (2016)

\end{thebibliography}



\end{document}